\documentclass[11pt,letterpaper]{amsart}

\usepackage[
colorlinks=true,              
linkcolor=blue,                
citecolor=red,               
urlcolor=cyan                  
]{hyperref}

\numberwithin{equation}{section}
\theoremstyle{plain}
\newtheorem{theorem}{Theorem}[section]
\newtheorem{prop}[theorem]{Proposition}

\newtheorem{lemma}[theorem]{Lemma}

\newtheorem{corollary}[theorem]{Corollary}

\theoremstyle{definition}

\newtheorem{example}[theorem]{Example}

\newtheorem{remark}[theorem]{Remark}

\newcommand{\Rmnum}[1]{\expandafter\@slowromancap\romannumeral #1@}

\newcommand{\Ric}{\operatorname{Ric}}
\newcommand{\mr}{\mathbb{R}}
\newcommand{\ud}{\mathrm{d}}
\newcommand{\Hess}{\mathrm{Hess}}
\newcommand{\ms}{\mathbb{S}}

\newcommand{\diam}{\mathrm{diam}}

\allowdisplaybreaks

\keywords{Bonnet-Myers theorem,  $Q$-curvature}
\subjclass{53C20, 53C21, 53C18}

\author{Xumin Jiang}
\address{Xumin Jiang, School of Sciences, Great Bay University, Dongguan 523000, China}
\email{xjiang@gbu.edu.cn}

\author{Mingxiang Li}
\address{Mingxiang Li, Department  of Mathematics \& Institue of Mathematical Sciences, The Chinese University of Hong Kong}
\email{limx@smail.nju.edu.cn}

\author{Zhehui Wang}
\address{Zhehui Wang, School of Sciences, Great Bay University, Dongguan 523000, China}
\email{wangzhehui@gbu.edu.cn}

\begin{document}
	\title{Bonnet-Myers  type theorems for 
		$Q$-curvature on four-manifolds}
	
	\date{}
	\maketitle
	
	\begin{abstract}
		Let $(M^4,g)$ be a complete four-dimensional Riemannian manifold. First,  if the $Q$-curvature  $Q_g\geq 6k^2$ and scalar curvature $R_g\geq -12k$ for some positive constant $k$,
         then $(M^4,g)$ is   either Einstein with $\Ric_g=-3kg$  or  compact with $R_g\ge 12k$.   As a corollary,  the fundamental group  \(\pi_1(M^4)\)  satisfies $|\pi_1(M^4)|\leq 16\pi^2/\left(\int_{M^4}Q_g\ud\mu_g\right), $ under the additional assumption $R_g>-12k$.
          Second, if the scalar curvature $R_g>0$ and  $Q_g\geq \theta R_g$ for  a positive constant $\theta$, 
              then  $M^4$ is compact and 
 the diameter of $(M^4,g)$ is at most $4\pi/\sqrt{15\theta}$. 
	\end{abstract}
	
	\section{Introduction}

 On a complete four-manifold $(M^4,g)$, Branson's $Q$-curvature $Q_g$ is given by the following  formula
	\begin{equation}\label{def of Q}
		Q_g=\frac{1}{6}\left(-\Delta_g R_g-3|\Ric_g|_g^2+R_g^2\right)
	\end{equation}
	where $R_g$ denotes the scalar curvature, $\Ric_g$ the Ricci tensor,  and $\Delta_g$ the Laplace–Beltrami operator. Our normalization of $Q$-curvature differs by a factor of two from the convention used in
some references.  Compared with other curvature quantities, the 
	$Q$-curvature is a relatively young notion and remains less thoroughly understood.
	
	 A crucial property of the 
	$Q$-curvature on compact 4-manifolds is the conformal invariance of its integral, which naturally leads one to regard it as a generalization of the Gaussian curvature for compact surfaces. 
Using $Q$-curvature, we can rewrite the four-dimensional Chern-Gauss-Bonnet formula as follows
	\begin{equation}\label{eq:CGB-formula}
    \int_{M^4}Q_g\ud\mu_g+\frac{1}{4}\int_{M^4}|W_g|_g^2\ud\mu_g=8\pi^2\chi(M^4)
	\end{equation}
	where $W_g$ and $\chi(M^4)$ denote Weyl tensor and Euler characteristic respectively. Together with the Paneitz operator, the 
	$Q$-curvature has become one of the central topics in conformal geometry. We refer to the notable works including Chang–Yang \cite{Chang-Yang}, Djadli-Malchiodi \cite{DM}, and Gursky \cite{Gursky Ann,GUrsky CMP 97, Gursky CMP} for further discussions on 	$Q$-curvature on compact four-manifolds. 
    
    For any conformal metric $\tilde g\in [g]$ of the compact 4-manifold $(M^4,g)$, the following two integral quantities are conformal invariant,
\begin{equation}\label{eq:Q-conformal-invar}
    \int_{M^4}Q_{\tilde g}\ud\mu_{\tilde g}=\int_{M^4}Q_g\ud\mu_g
\end{equation}
and 
\begin{equation}\label{eq:Weyl-conformal-inva}
    \int_{M^4}|W_{\tilde g}|_{\tilde g}^2\ud\mu_{\tilde g}=    \int_{M^4}|W_g|_g^2\ud\mu_g.
\end{equation}
The four-dimensional Chern–Gauss–Bonnet formula \eqref{eq:CGB-formula} reveals a profound connection between the two conformal invariants \eqref{eq:Q-conformal-invar} and \eqref{eq:Weyl-conformal-inva}, which together govern the topology of compact 4-manifolds.
The Yamabe invariant, another conformal invariant quantity defined by
\begin{equation}\label{eq:Yamabe-invariant}
    Y(M^4,[g]):=\inf_{\tilde g=e^{2w}g}\frac{\int_{M^4}R_{\tilde g}\ud\mu_{\tilde g}}{(\int_{M^4}\ud\mu_{\tilde g})^{1/2}},
\end{equation}
is well known to be strongly tied to the topology of compact manifolds.
The three conformal invariants \eqref{eq:Q-conformal-invar}, \eqref{eq:Weyl-conformal-inva}, and \eqref{eq:Yamabe-invariant} have a surprising influence on compact four-manifolds. Let us recall  these remarkable contributions.

For a sufficiently small positive constant  $\varepsilon_0$, if  the two conformal invariant quantities \eqref{eq:Weyl-conformal-inva} and  \eqref{eq:Yamabe-invariant} satisfy
\begin{equation*}\label{Gursky-IUMJ-condition}
    \int_{M^4}|W_g|_g^2\ud\mu_g<\varepsilon_0,\qquad  Y(M^4,[g])\geq 0, 
\end{equation*}
Gursky \cite{Gursky-IUMJ} showed that  the Euler characteristic is controlled by 
\begin{equation*}
    \chi(M^4)\leq 2.
\end{equation*}
Later, in another  remarkable work \cite[Corollary F]{Gursky Ann}, he  showed that  under the assumptions 
\begin{equation}\label{eq:positive-Q-and-Y}
    \int_{M^4}Q_g\ud\mu_g >0, \qquad Y(M^4,[g])>0,
\end{equation}
the first Betti number $b_1(M^4)$ satisfies 
\begin{equation}\label{eq:b_1=0}
       b_1(M^4)=0.
\end{equation}
Under the condition
 \begin{equation*}
     Y(M^4,[g])\geq 0,
 \end{equation*}
 another interesting and important result, due to Gursky \cite{Gursky CMP}, is the following upper bound:
 \begin{equation}\label{Gursky upper bound}
 	\int_{M^4}Q_g\ud\mu_g\leq 16\pi^2=6|\mathbb{S}^4|,
 \end{equation} 
 where $|\mathbb{S}^4|$ denotes the volume of standard four-sphere  and the  equality holds if and only if $(M^4,g)$ is  conformally equivalent  to the standard four-sphere.
 In the pioneering works of Chang, Gursky, and Yang \cite{CGY,CGY JAM} (see also \cite{GV}), under the same condition \eqref{eq:positive-Q-and-Y}, they show that 
\begin{equation}\label{eq:conformal-postive-Ricci}
    \exists \tilde g\in [g], \quad \Ric_{\tilde g}>0.
\end{equation}
Once the  positivity of  Ricci curvature \eqref{eq:conformal-postive-Ricci} is obtained, a wealth of geometric and topological information about the manifold follows. For example, by the Bochner technique, the conclusion \eqref{eq:b_1=0}
 follows immediately. 
 Moreover, combining \eqref{eq:conformal-postive-Ricci} with the Bonnet--Myers theorem implies that the fundamental group $\pi_1(M^4)$ is finite. Furthermore, under the assumption \eqref{eq:positive-Q-and-Y}, the  Chang-Gursky-Yang result \eqref{eq:conformal-postive-Ricci} together with Gursky's inequality \eqref{Gursky upper bound} yields that  
\begin{equation}\label{pi_1-equation}
    |\pi_1(M^4)| \leq \frac{16\pi^2}{\int_{M^4} Q_g \ud\mu_g}.
\end{equation}
Although the above inequality follows as a corollary of the results in \cite{CGY} and \eqref{Gursky upper bound}, it appears not to have been explicitly stated elsewhere. We therefore include it as Proposition \ref{pro:p_1-formula} and provide a brief proof there.

 Moreover, based on their work \cite{CGY, CGY JAM}, Chang, Gursky, and Yang \cite{CGY IHES} (see also \cite{CGZ}) established a beautiful  conformally invariant sphere theorem for compact four-manifolds: under a sharp integral pinching condition,
 \begin{equation*}\label{eq:pinching-condition}
     \int_{M^4}Q_g\ud\mu_g>\frac{1}{4}\int_{M^4}|W_g|^2\ud\mu_g, \qquad Y(M^4,[g])>0,
 \end{equation*}
 then $M^4$ is diffeomorphic to either $\ms^4$ or $\mathbb{RP}^4$.

These remarkable results lead us to believe that the  $Q$-curvature may have a much deeper connection to the manifold's topology than we had expected. As we noted before, under the conformal invariant condition \eqref{eq:positive-Q-and-Y}, Chang, Gursky and Yang \cite{CGY}  established \eqref{eq:conformal-postive-Ricci}. Under the same assumption \eqref{eq:positive-Q-and-Y}, Lee \cite{Lee} showed that there exists a conformal metric $\tilde g\in[g]$ such that 
 \begin{equation}\label{postiveQandR}
 	Q_{\tilde g}>0, \quad R_{\tilde g}>0, \quad \forall x\in M^4.
 \end{equation}
Consequently, the assumption \eqref{eq:positive-Q-and-Y} is equivalent to the existence of a conformal metric $\tilde g\in[g]$ satisfying the pointwise inequalities \eqref{postiveQandR}.

These results suggest that, on four-manifolds, the positivity of both the 
$Q$-curvature and the scalar curvature imposes stronger constraints than does the positivity of the Ricci curvature. All of the above results assume compactness of the manifold. A natural question then arises: {\bf can the compactness of the manifold be obtained by imposing suitable conditions on the 
$Q$-curvature and the scalar curvature?}

Before dealing  with such a question,  let us first  recall the classical Bonnet-Myers theorem. For a complete Riemannian manifold $(M^n,g)$, the Bonnet-Myers theorem  \cite{Myers} asserts that   if the  Ricci curvature satisfies \begin{align*}
\Ric_g\geq (n-1)kg
\end{align*} for some positive constant $k$, then $M^n$ is compact and its  diameter is controlled by 
    \begin{equation}\label{eq:diam-Bonnet-Myers}
       \diam(M^n,g) \leq  \pi/\sqrt{k}.
    \end{equation} Moreover, Cheng's rigidity theorem \cite{Cheng} shows that  if  equality in  \eqref{eq:diam-Bonnet-Myers} holds,  then $(M^n,g)$ is isometric to a round sphere. 
	 These elegant and foundational results in Riemannian geometry beautifully link curvature lower bounds to global topological properties. In addition, there have been numerous generalizations of Bonnet–Myers type theorems; see, for instance, \cite{Ambrose, BHJ, Calabi, CHY, CGT, X. Li, ShenYe Duke, ShenYe} and the references therein. 

In fact, about two decades ago, for a standard 4-sphere $(\ms^4,g_0)$, Chang, Hang and Yang \cite{CHY} considered an open connected domain $\Omega\subset\ms^4$ with a complete conformal metric $g=e^{2u}g_0$. Under the conditions
\begin{equation*}
    |R_g|+|\nabla^gR_g|_g\leq c_0, \quad R_g\geq c_1>0,\quad  Q_g\geq c_2>0
\end{equation*}
for some positive constants $c_0,c_1,c_2$, they showed that $\Omega=\ms^4$. By combining \cite[Theorem 1.1]{LWX} with the second author's work \cite[Theorem 1.4]{Li 26 RMI}, one can directly  obtain that there exists no complete conformal metric $g=e^{2u}|\ud x|^2$ on $\mr^4$ such that 
$$Q_g\geq 1, \qquad R_g\geq 0.$$
These results, to some extent, provide an answer to our previous question in the  particular setting.

In this paper, we aim to establish a Bonnet–Myers type theorem in terms of the 
$Q$-curvature and to answer the question raised above. Our first theorem is the following.

    \begin{theorem}\label{thm:Bonnet myers}
Let $(M^4,g)$ be a smooth, complete and connected four-dimensional Riemannian manifold. Suppose the $Q$-curvature $Q_g$ and scalar curvature $R_g$ satisfy
\begin{align}\label{condition-of-Q-R}
    Q_g \geq 6k^2 \quad \text{and} \quad R_g \geq -12k
\end{align}
for some constant $k>0$.
Then either $(M^4,g)$ is Einstein with $\Ric_g = -3kg$, or $M^4$ is compact with $R_g \geq 12k$.
\end{theorem}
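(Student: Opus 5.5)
The plan is to turn the definition \eqref{def of Q} into a differential inequality for $R_g$ alone. Since $|\Ric_g|^2\ge R_g^2/4$ in dimension four, the hypothesis $Q_g\ge 6k^2$ gives
\begin{equation*}
-\Delta_g R_g = 6Q_g+3|\Ric_g|^2-R_g^2 \ \ge\ 36k^2-\tfrac14 R_g^2+3\bigl|\Ric_g-\tfrac{R_g}{4}g\bigr|^2 ,
\end{equation*}
that is, $\Delta_g R_g\le \tfrac14 (R_g-12k)(R_g+12k)-3|E_g|^2$, where $E_g$ is the traceless Ricci tensor.

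\textbf{Dichotomy.} Put $v=R_g+12k\ge 0$. Then $\Delta_g v\le \tfrac14 v(v-24k)\le c(x)\,v$ with $c$ locally bounded. The strong maximum principle for nonnegative supersolutions then gives either $v\equiv 0$ or $v>0$ everywhere.

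In the first case $R_g\equiv -12k$, so the left side vanishes. All inequalities used above must then be equalities, so $E_g\equiv 0$ and $Q_g\equiv 6k^2$. Hence $\Ric_g=-3kg$ and $g$ is Einstein.

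In the second case $R_g>-12k$, and I want to show that $R_g\ge 12k$ and that $M^4$ is compact.

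\textbf{Showing $R_g\ge 12k$.} Set $w=12k-R_g<24k$. It satisfies $\Delta_g w\ge \tfrac14 w(24k-w)$. On $\{w>0\}$ this makes $w$ strictly subharmonic, with a reaction term bounded below by a positive multiple of $w$ as long as $w$ stays away from $24k$. I would first try a maximum-principle argument.
\begin{itemize}
\item If $\sup w$ is attained, the inequality forces $\sup w\le 0$.
\item If it is not attained, I would use an exhaustion with cutoff functions built from the distance function. The point is to show that a positive solution of $\Delta w\ge \lambda w$ cannot stay bounded below $24k$ on a complete manifold unless volumes grow fast. That growth would in turn have to be controlled by the geometry obtained in the compactness step.
\end{itemize}
So the two conclusions are intertwined. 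A cleaner route may be to prove the diameter bound first on the set where $R_g>-12k$, then deduce compactness, and only then apply the ordinary maximum principle on the closed manifold to get $w\le 0$.

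\textbf{Compactness.} I would use a weighted second-variation argument along a minimizing geodesic, in the spirit of Shen--Ye \cite{ShenYe Duke,ShenYe}. For a unit-speed minimizing geodesic $\gamma$ of length $L$ and any $\varphi$ vanishing at the endpoints, the second variation gives
\begin{equation*}
\int_0^L \bigl(3\varphi'^2-\Ric(\gamma',\gamma')\varphi^2\bigr)\,ds\ge 0.
\end{equation*}
I would choose $\varphi=f\,v^{\alpha}$, with $f=\sin(\pi s/L)$ and a suitable power $\alpha$. Along $\gamma$, $(v\circ\gamma)''=\Delta v-\sum_{i\ge2}\nabla^2 v(e_i,e_i)$. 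The plan is to absorb the unknown directional Ricci term using the good term $3|E_g|^2$ together with the inequality
\begin{equation*}
\Ric(\gamma',\gamma')\ \ge\ \tfrac{R_g}{4}-\tfrac{\sqrt3}{2}|E_g|,
\end{equation*}
which holds because the traceless part in a unit direction is at most $\tfrac{\sqrt3}{2}|E_g|$. The goal is an inequality of the form $L^2\lesssim 1/k$, which would give compactness and a diameter bound.

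\textbf{Main obstacle.} The hard part is controlling the transverse Hessian terms of $v$ along $\gamma$: only the full Laplacian is controlled, not $(v\circ\gamma)''$. If the one-dimensional reduction fails, I would instead work in the conformal metric $\tilde g=v^{\beta}g$. For a suitable $\beta$, I would try to show that $\tilde g$ has Ricci curvature bounded below by a positive constant, apply Bonnet--Myers \cite{Myers} to $\tilde g$, and transfer compactness back to $g$ using $v>0$ and the upper bound on $v$ that follows from the differential inequality.
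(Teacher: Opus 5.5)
Your dichotomy step is correct and is the paper's Lemma~\ref{lem:GM-lemma}. The rest of the proposal does not amount to a proof.

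\textbf{Compactness.} Your plan aims at a statement that is false. You want $L^2\lesssim 1/k$ for minimizing geodesics, and your ``cleaner route'' proves a diameter bound before knowing $R_g\ge 12k$. No such bound exists: the ellipsoids of Example~\ref{example} satisfy $Q_g\ge 6k^2$ and $R_g\ge 12k$, yet their diameters tend to infinity. Compactness must be obtained qualitatively, by contradiction along a ray.

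The missing idea is the Shen--Ye device, which removes exactly the obstacle you identified. Do not use $g$-geodesics with test functions $\sin(\pi s/L)v^\alpha$. Instead, take a minimizing ray of $(R_g+\varepsilon)g$ and reparametrize it by $g$-arclength. This curve satisfies $\nabla_TT=(\nabla u)^\perp$ with $u=\frac12\log(R_g+\varepsilon)$. The conformal change formula then reduces $\Ric_{\tilde g}(T,T)$ to $\Ric_g(T,T)-\Delta_g u-2u''$. The $u''$ term integrates by parts. What remains is $\Ric_g(T,T)-\frac{\Delta_g(R_g+\varepsilon)}{2(R_g+\varepsilon)}$ plus a favorable gradient term. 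This involves the full Laplacian, not $(v\circ\gamma)''$, and Lemma~\ref{lem:modified CGYlemma} bounds it below by $(3Q_g-\varepsilon^2/8)/(R_g+\varepsilon)$. The argument then runs as follows:
\begin{itemize}
\item one obtains $\int(\phi')^2\ge\int\bigl(\frac1{16}(z')^2+\frac{3c}{4}e^{-z}\bigr)\phi^2$ on $(0,\infty)$;
\item Lemma~\ref{lem:positive-solution} gives a positive solution of the associated ODE;
\item a Riccati argument shows $e^{-z/2}$ is both integrable and convergent to a positive limit, a contradiction.
\end{itemize}

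This needs $c=6k^2-\varepsilon^2/24>0$ and $R_g+\varepsilon>0$ everywhere. That is, it needs a uniform bound $R_g\ge-12k+\varepsilon_0$ (Theorem~\ref{thm:Bonnet myers-2}). If $\inf R_g=-12k$, you are forced to take $\varepsilon\ge 12k$, so $c\le0$, and the argument yields only completeness of $(R_g+12k)g$. Hence the gap $R_g\ge 12k$ must be proved first, the reverse of your suggested order.

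Your fallback via $\tilde g=v^\beta g$ is also unsupported. $\Ric_{\tilde g}$ involves the full Hessian of $\log v$, which nothing controls. Also, no upper bound on $v$ follows from $\Delta_g v\le\frac14 v(v-24k)$, since a supersolution inequality gives none.

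\textbf{The gap $R_g\ge 12k$.} Your sketch does not confront the real difficulty. $\inf R_g$ may equal $-12k$ without being attained, so $w$ can approach $24k$, where your reaction term $\frac14 w(24k-w)$ degenerates. Moreover, $g$ has no Ricci lower bound, so neither an Omori--Yau principle nor volume-growth control is available for $\Delta_g$. The paper handles this in three steps:
\begin{itemize}
\item it proves $g_S=(R_g+12k)g$ is complete (Lemma~\ref{lem:g_S-complete}, again a Shen--Ye argument, now with $\varepsilon=12k$);
\item it shows $\Ric^0_f(g_S)\ge0$ for $f=\log S$ (Lemma~\ref{lem:Ricf0}), which gives the weighted Laplacian comparison $\mathcal{L}r_p\le 4e^{-f/2}/s_p$ for $\mathcal{L}=S^{-1}\Delta_g$;
\item it runs an Omori--Yau argument on the bounded function $q/(1+q)$, $q=S^{-1/2}$, which satisfies $\mathcal{L}\bigl(q/(1+q)\bigr)\ge(3kq^3-\frac18 q)/(1+q)^2$.
\end{itemize}
The bad case $\sup q=\infty$, i.e.\ $\inf R_g=-12k$, is excluded because the left side of the penalized inequality tends to $3k>0$. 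None of these ingredients appears in your proposal. As written, it establishes only the dichotomy.
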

\begin{remark}
     The standard hyperbolic space $(\mathbb{H}^4,g_0)$ satisfies
     \begin{align}
         Q_{g_0}=6, \quad R_{g_0}=-12
     \end{align}
     and the  standard product manifold $(\mr\times\ms^3,g_1)$ satisfies \begin{align}
         Q_{g_1}=0,  \quad R_{g_1}=6.
     \end{align} Up to a scaling, these two noncompact models show that the assumptions \eqref{condition-of-Q-R}  and $k>0$ are sharp. 
\end{remark}

As a direct application of Theorem \ref{thm:Bonnet myers},  we obtain the control of the fundamental group via the total  $Q$-curvature.
\begin{corollary}\label{cor:pi_1-control-for-Q}
     Let $(M^4,g)$ be a smooth, complete and connected four-dimensional Riemannian manifold. Suppose that, after an appropriate scaling of $g,$ the $Q$-curvature $Q_g$ and scalar curvature $R_g$ satisfy
   $$Q_g\geq 6\quad \mathrm{and} \quad R_g>-12.$$
    Then the fundamental group $\pi_1(M^4)$ is finite with \eqref{pi_1-equation} holds.
\end{corollary}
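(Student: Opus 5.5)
The corollary asks for finiteness of $\pi_1(M^4)$ together with the bound \eqref{pi_1-equation}. The plan is to pass to the universal cover and apply Theorem \ref{thm:Bonnet myers} there with $k=1$. Let $\pi:\tilde M\to M^4$ be the universal covering and $\tilde g=\pi^*g$. Then $(\tilde M,\tilde g)$ is smooth, complete and connected, and curvature quantities are local, so $Q_{\tilde g}\ge 6$ and $R_{\tilde g}>-12$. Theorem \ref{thm:Bonnet myers} with $k=1$ leaves two alternatives. In the Einstein alternative $\Ric_{\tilde g}=-3\tilde g$, so $R_{\tilde g}\equiv-12$, which contradicts the strict inequality $R_{\tilde g}>-12$. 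Hence $\tilde M$ is compact with $R_{\tilde g}\ge 12$.

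Since $\tilde M$ is compact, the covering has finitely many sheets, and their number is $|\pi_1(M^4)|$. So $\pi_1(M^4)$ is finite, $M^4$ is compact, and
$$\int_{\tilde M}Q_{\tilde g}\ud\mu_{\tilde g}=|\pi_1(M^4)|\int_{M^4}Q_g\ud\mu_g,$$
where $\int_{M^4}Q_g\ud\mu_g\ge 6\,\mathrm{Vol}(M^4,g)>0$.

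It remains to bound the left-hand side by $16\pi^2$ using Gursky's inequality \eqref{Gursky upper bound}. That inequality needs $Y(\tilde M,[\tilde g])\ge 0$. This holds because $R_{\tilde g}\ge 12>0$ on a compact manifold: the Yamabe invariant has the sign of the first eigenvalue of the conformal Laplacian $-6\Delta+R$, which is positive here. Therefore $|\pi_1(M^4)|\int_{M^4}Q_g\ud\mu_g\le 16\pi^2$, and dividing by the positive total $Q$-curvature gives \eqref{pi_1-equation}.

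Alternatively, one could apply Proposition \ref{pro:p_1-formula} directly on $M^4$. Its hypotheses follow because $M^4$ is compact (a finite quotient of $\tilde M$), $R_g\ge 12>0$, and the total $Q$-curvature is positive, so \eqref{eq:positive-Q-and-Y} holds. There is no real obstacle in this argument. The only points needing care are ruling out the Einstein alternative through the strict inequality on $R_g$, and checking that Gursky's hypothesis $Y\ge0$ follows from the pointwise bound $R_{\tilde g}\ge 12$.
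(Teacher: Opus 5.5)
Your argument is correct. It differs from the paper in how you get finiteness of $\pi_1(M^4)$.

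\textbf{The paper's route.} The paper first shows that $M^4$ itself is compact with $R_g>0$, citing Theorem \ref{thm:Bonnet myers-2} and Corollary \ref{cor:R_g-lower-bound}. This gives \eqref{eq:positive-Q-and-Y}, and the paper then applies Proposition \ref{pro:p_1-formula}. There, compactness of the universal cover comes from the Chang--Gursky--Yang theorem, which produces a conformal metric $g_1$ with $\Ric_{g_1}>0$. The classical Bonnet--Myers theorem is then applied to $(\widetilde M^4,\tilde g_1)$, and Gursky's inequality is used on the cover.

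\textbf{Your route.} You observe that the pointwise hypotheses lift to the universal cover. You then apply Theorem \ref{thm:Bonnet myers} upstairs, which gives compactness of $\widetilde M$ (hence $|\pi_1(M^4)|<\infty$) and $R_{\tilde g}\ge 12$ in one step. Gursky's inequality finishes the proof as in the paper.

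\textbf{Comparison.} Your route avoids the deep fully nonlinear existence theorem of Chang--Gursky--Yang and uses only the paper's own results plus Gursky's bound. The paper's route goes through Proposition \ref{pro:p_1-formula}, which holds under the purely conformally invariant hypothesis \eqref{eq:positive-Q-and-Y}, so it has broader scope. Your alternative paragraph is essentially the paper's proof.

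\textbf{A point in your favor.} The hypothesis $R_g>-12$ is only a pointwise strict inequality. On a manifold not yet known to be compact, it does not supply the uniform margin $\varepsilon_0$ that Theorem \ref{thm:Bonnet myers-2} and Corollary \ref{cor:R_g-lower-bound} require. The proof of Theorem \ref{thm:Bonnet myers} handles exactly this, through Lemma \ref{lem:GM-lemma} and Proposition \ref{prop:gap-R}. So citing Theorem \ref{thm:Bonnet myers}, as you do, is what makes the compactness step airtight.
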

  
Unlike the classical Bonnet-Myers theorem, the assumption \eqref{condition-of-Q-R} does not allow one to control the diameter of the compact manifold $M^4$ (excluding the negative Einstein case); see Example \ref{example}.
 Interestingly, we find that if the quotient $Q_g/R_g$ admits a positive lower bound, then we can control the diameter. This quotient is itself an interesting new geometric quantity, and Ge-Wang-Wei \cite{GWW} investigated a Yamabe type theorem involving $Q_g/R_g$. Our second Bonnet-Myers type result is as follows.

\begin{theorem}\label{thm:diam}
Let $(M^4,g)$ be a smooth, complete, connected four‑dimensional Riemannian manifold. Suppose  the $Q$-curvature $Q_g$ and scalar curvature $R_g$ satisfy
\begin{align}\label{condition-of-diam}
R_g\geq 0 \quad \text{and} \quad Q_g\geq \theta R_g.
\end{align}
for some constant $\theta>0$.
Then  either $(M^4,g)$ is  Ricci‑flat, or $M^4$ is compact with $R_g\geq 24\theta$ and  the diameter admits an upper bound
\begin{equation}\label{eq:diam-control}
\operatorname{diam}(M^4,g)\leq 4\pi /\sqrt{15\theta}.
\end{equation}
\end{theorem}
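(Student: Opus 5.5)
The plan starts by rewriting the $Q$-curvature with the traceless Ricci tensor $E_g=\Ric_g-\frac{R_g}{4}g$. Since $|\Ric_g|^2=|E_g|^2+R_g^2/4$, formula \eqref{def of Q} becomes $6Q_g=-\Delta_g R_g-3|E_g|^2+\frac14R_g^2$. Combined with $Q_g\ge\theta R_g$ this gives the key differential inequality
\begin{equation*}
\Delta_g R_g\le \frac{1}{4}R_g\,(R_g-24\theta)-3|E_g|^2 .
\end{equation*}

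\textbf{Step 1: the dichotomy.} Since $R_g\ge 0$ and the right-hand side is at most $\frac14 R_g^2\le C R_g$ on compact sets, $R_g$ is a nonnegative local supersolution of $\Delta u - Cu\le 0$. The strong maximum principle then gives either $R_g\equiv 0$ or $R_g>0$ everywhere. If $R_g\equiv0$, the inequality reads $0\le -3|E_g|^2$, so $E_g=0$ and $\Ric_g=0$, i.e.\ $g$ is Ricci-flat. From now on assume $R_g>0$.

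\textbf{Step 2: a spectral Ricci lower bound and the diameter.} Let $\rho$ be the smallest eigenvalue of $\Ric_g$. Since $E_g$ is traceless in dimension four, $\rho\ge \frac{R_g}{4}-\frac{\sqrt3}{2}|E_g|$. The key inequality also controls $|E_g|^2\le \frac{1}{12}\bigl(R_g^2-24\theta R_g-4\Delta_g R_g\bigr)$. I would take a test function $\varphi=R_g^{a}$ for a suitable exponent $a>0$ and compute $-\gamma\Delta_g\varphi/\varphi+\rho$. The Laplacian term produces $-\gamma a\Delta_g R_g/R_g$ plus a gradient term $-\gamma a(a-1)|\nabla R_g|^2/R_g^2$, which is nonnegative for $a\le1$. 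The term $-\frac{\sqrt3}{2}|E_g|$ is absorbed by Young's inequality, $\frac{\sqrt3}{2}|E_g|\le \varepsilon|E_g|^2 R_g^{-1}+\frac{3}{16\varepsilon}R_g$, and then we substitute the bound on $|E_g|^2$. With $\gamma,a,\varepsilon$ chosen so that the $\Delta_g R_g$ contributions cancel and the $R_g^2$ terms have the right sign, the goal is
\begin{equation*}
-\gamma\Delta_g\varphi+\rho\,\varphi\ge 3\lambda\,\varphi,\qquad \lambda=\frac{15\theta}{16}\cdot c
\end{equation*}
for an admissible $\gamma$ (in the Shen--Ye range $\gamma<4/(n-1)=4/3$). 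Then the spectral Bonnet--Myers theorem of Shen--Ye \cite{ShenYe Duke,ShenYe} (equivalently, the $\mu$-bubble/warped second-variation argument along minimizing geodesics) gives compactness and the bound $\diam\le 4\pi/\sqrt{15\theta}$. The constants $a,\gamma,\varepsilon$ must be tuned to reproduce exactly this constant.

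\textbf{Step 3: the scalar curvature lower bound.} Once $M^4$ is compact, $R_g$ attains its minimum at some point $p$. There $\Delta_g R_g(p)\ge0$, so $0\le \frac14R_g(p)(R_g(p)-24\theta)-3|E_g|^2(p)\le\frac14R_g(p)(R_g(p)-24\theta)$. Since $R_g(p)>0$, this forces $\min R_g\ge24\theta$, hence $R_g\ge 24\theta$ everywhere.

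The main obstacle is Step 2: the pointwise Ricci curvature is not bounded below, so one must find the right exponent $a$ and weight $\gamma$ making the spectral inequality close up. The main difficulty is handling the $-\Delta_g R_g$ term coming from $|E_g|^2$, which has the wrong sign unless it is exactly cancelled by the $\gamma$-term. This is what produces the specific constant $15\theta/16$. If the algebra does not close with a pure power, I would next try $\varphi=(R_g+c)^a$ or a direct second-variation argument along a minimizing geodesic with a warping function built from $R_g$.
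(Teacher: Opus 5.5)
\textbf{Gap in Step 2.} Your Steps 1 and 3 are correct and match the paper: Step 1 is Lemma \ref{lem:GM-lemma} with $k=0$, and Step 3 is the minimum-point argument of Corollary \ref{cor:R_g-lower-bound}. The gap is Step 2, which carries the whole content of the theorem. You only state a goal and leave $a,\gamma,\varepsilon$ ``to be tuned''. The target you wrote, a pointwise bound with $\lambda=\frac{15\theta}{16}\cdot c$, is also not the right one.

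In fact the algebra closes at once, and the choice is forced. The $\Delta_g R_g$ terms cancel only if $\gamma a=\varepsilon/3$. The coefficient of $R_g$ is $\frac14-\frac{\varepsilon}{12}-\frac{3}{16\varepsilon}=-\frac{(2\varepsilon-3)^2}{48\varepsilon}$, so the only usable value is $\varepsilon=\frac32$, hence $\gamma a=\frac12$. With $a=1$, $\gamma=\frac12$ your bounds give exactly
\[
\rho-\frac{\Delta_g R_g}{2R_g}\ \ge\ 3\theta,\qquad\text{i.e.}\qquad \Ric_g^{R_g,\frac12}\ge 3\theta\, g .
\]
This is the paper's Lemma \ref{lem:modified CGYlemma} with $\varepsilon=0$, combined with $Q_g\ge\theta R_g$. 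So the pointwise constant is $\theta$, not $\frac{15}{16}\theta$. The factor $\frac{16}{15}$ is the loss in the second-variation step at $\sigma=\frac12$, $n=4$.

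Concretely, Lemma \ref{lem:SY} with $f=R_g$, $\sigma=\frac12$, $z=\log R_g\circ\gamma$ gives
\[
3\int(\phi')^2-\frac12\int z'\phi\phi'-\frac5{16}\int(z')^2\phi^2\ge 3\theta\int\phi^2 .
\]
The Young inequality $-\frac12 z'\phi\phi'\le\frac15(\phi')^2+\frac5{16}(z')^2\phi^2$ then leaves $\frac{16}{5}\int(\phi')^2\ge 3\theta\int\phi^2$. Testing with $\sin(\pi s/l)$ gives $l\le 4\pi/\sqrt{15\theta}$. The tuning is not cosmetic: taking instead $a=\frac12$, $\gamma=1$ and discarding the gradient term only yields $2\pi/\sqrt{3\theta}$.

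\textbf{Completeness of the conformal metric.} You also pass over a point the paper treats separately. The warped second-variation argument runs along minimizing geodesics of $\tilde g=R_g g$, reparametrized by $g$-arclength. It yields $d_g(x,y)\le 4\pi/\sqrt{15\theta}$ for all $x,y$, and hence compactness of the $g$-complete manifold, only if any two points are joined by a $\tilde g$-minimizing geodesic. That requires $\tilde g$ to be complete.

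Completeness is not automatic: $R_g>0$ may decay to $0$ at infinity, so $R_g g$ could a priori be incomplete. The paper proves completeness in Lemma \ref{lem:g_S-complete}. It builds a ray of finite $\tilde g$-length and applies Lemma \ref{lem:SY} along it. The one-dimensional Allegretto--Piepenbrink Lemma \ref{lem:positive-solution} and a Riccati argument then force $\log R_g$ to converge along the ray, which contradicts the finite length.

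To complete your route you need either this lemma, or a spectral Bonnet--Myers theorem proved for complete $(M^4,g)$ without assuming completeness of the conformal metric. In the latter case you must also verify that its constant at $\gamma=\frac12$ is the $\frac{16}{15}$ computed above.
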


\begin{remark}
We do not expect the  upper bound \eqref{eq:diam-control} is sharp.  It would be interesting to determine the sharp upper bound and the corresponding rigidity behavior.
\end{remark}

	To conclude the introduction, we briefly outline the structure of this paper. In Section \ref{sec:key lemmas}, we prove  a modified Chang-Gursky-Yang's lemma established in   \cite{CGY} and 	recall a property of the conformal Ricci tensor introduced by Shen–Ye \cite{ShenYe}. Meanwhile, we  establish a lemma which is essentially the one-dimensional Allegretto–Piepenbrink theorem. In Section \ref{sec:positive-Q-and-R}, under a more restrictive condition compared with \eqref{condition-of-Q-R}, we show the  complete manifold must be compact (see Theorem \ref{thm:Bonnet myers-2}). Meanwhile, we sketch the proof of \eqref{pi_1-equation} and prove Corollary \ref{cor:pi_1-control-for-Q}.  In Section \ref{sec:compelte-metric}, we show that if $R_g>-12k$, the conformal metric $(R_g+12k)g$ must be complete (see Lemma \ref{lem:g_S-complete}). Building on this, we find  a gap   phenomenon for $R_g$ (see Proposition \ref{prop:gap-R}). Then, in Section \ref{sec:diameter-control}, we finish the proof of Theorems \ref{thm:Bonnet myers} and \ref{thm:diam}.
In the final Section \ref{sec:discussions}, we discuss some connections between  $Q$-curvature and  other geometric quantities. Meanwhile,  we  provide an example concerning the diameter under the condition \eqref{condition-of-Q-R}.

\vspace{1em}
		
	{\bf Acknowledgment.} The second author would like to   thank Jintian Zhu for helpful discussions.

	\section{Key lemmas}\label{sec:key lemmas}
	
	As noted  in the introduction, Chang-Gursky-Yang \cite{CGY} discovered that  under \eqref{eq:positive-Q-and-Y}, there exists   a conformal metric with positive Ricci curvature. In their work, they considered  the $k$-th elementary symmetric functions $\sigma_k(A_g)$ of the Schouten tensor $A_g$. For an $n$-dimensional manifold $(M^n,g)$, the Schouten tensor  $A_g$ is defined by 
    \begin{align*}
    A_g:=\frac{1}{n-2}\left(\Ric_g-\frac{R_g}{2(n-1)}g\right).
    \end{align*}
	In particular, in dimension four, a direct computation yields
	\begin{align*}
    \sigma_1(A_g)=\frac{R_g}{6}, \quad \sigma_2(A_g)=\frac{1}{24}\left(-3|\Ric_g|^2_g+R_g^2\right).
    \end{align*}
	Using such identities together with \eqref{def of Q}, we can rewrite $Q_g$ as
	\begin{align*}
    Q_g=-\Delta_g\sigma_1(A_g)+4\sigma_2(A_g).
    \end{align*}
	When $(M^4,g)$ is compact, we have the integral identity
	\begin{align*}
    \int_{M^4} Q_g\ud\mu_g=4\int_{M^4}\sigma_2(A_g)\ud\mu_g.
    \end{align*}
    
Under the assumption \eqref{eq:positive-Q-and-Y},
	Chang, Gursky, and Yang \cite{CGY} employ the continuity method to find a conformal metric $\tilde g=e^{2u}g$    such that 
    \begin{equation}\label{eq:postive-sgima-1-2}
        \sigma_2(A_{\tilde g})>0
	,\quad \sigma_1(A_{\tilde g})>0.
    \end{equation}
    Then, using the  following crucial observation that, for a 4-manifold $(M^4,  g)$ with
    \begin{align*}
        R_{g}>0,
    \end{align*}  the Ricci curvature satisfies the inequality
	\begin{equation}\label{CGY lemma}
		\Ric_{ g}\geq \frac{12\sigma_2(A_{ g})}{R_{ g}} g.
	\end{equation}
Under \eqref{eq:postive-sgima-1-2}, it follows immediately that 
\begin{align*}
    \Ric_{ \tilde g}>0.
\end{align*}
 Note that
\eqref{CGY lemma} is precisely Lemma 1.2 in \cite{CGY}. 

In this paper, we replace $\sigma_2(A_g)$ by $Q_g$ in \eqref{CGY lemma}. This replacement naturally connects $Q$-curvature  to the conformal Ricci tensor introduced by Shen–Ye \cite{ShenYe}, which will be discussed in more detail later.  Different from Chang-Gursky-Yang's inequality \eqref{CGY lemma}, we modify it to fit  our setting.
	\begin{lemma}\label{lem:modified CGYlemma}
		Let $p\in M^4$ and each unit vector $v\in T_pM^4$. If there exists a constant $\varepsilon$ such that  $R_g+\varepsilon$ is positive  at $p$, then there holds
		\begin{equation}\label{eq:shifted-vector}
			\Ric_g(v,v)-\frac{\Delta_g(R_g+\varepsilon)}{2(R_g+\varepsilon)}
			\ge
			\frac{3Q_g-\varepsilon^2/8}{R_g+\varepsilon}.
		\end{equation}
	\end{lemma}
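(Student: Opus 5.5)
The plan is to clear denominators and reduce \eqref{eq:shifted-vector} to a pointwise algebraic inequality for the traceless Ricci tensor, exactly in the spirit of \eqref{CGY lemma}. Set $S:=R_g+\varepsilon$, which is positive at $p$. Multiplying \eqref{eq:shifted-vector} by $S>0$ and noting $\Delta_g S=\Delta_g R_g$, the claim becomes
\begin{equation*}
S\,\Ric_g(v,v)-\tfrac12\Delta_g R_g\ \ge\ 3Q_g-\tfrac{\varepsilon^2}{8}.
\end{equation*}
By \eqref{def of Q}, $3Q_g=-\tfrac12\Delta_g R_g-\tfrac32|\Ric_g|_g^2+\tfrac12R_g^2$. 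The Laplacian terms then cancel, and it suffices to show
\begin{equation*}
\tfrac32|\Ric_g|_g^2+S\,\Ric_g(v,v)-\tfrac12R_g^2+\tfrac{\varepsilon^2}{8}\ \ge\ 0 .
\end{equation*}
This is purely algebraic at the point $p$.

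Next I would decompose $\Ric_g=E+\tfrac{R_g}{4}g$ with $E$ traceless. Then $|\Ric_g|^2=|E|^2+\tfrac{R_g^2}{4}$ and $\Ric_g(v,v)=E(v,v)+\tfrac{R_g}{4}$. Substituting, the scalar terms are
\begin{equation*}
\tfrac38R_g^2+\tfrac14(R_g+\varepsilon)R_g-\tfrac12R_g^2+\tfrac18\varepsilon^2=\tfrac18(R_g^2+2\varepsilon R_g+\varepsilon^2)=\tfrac{S^2}{8}.
\end{equation*}
So the expression collapses to $\tfrac32|E|^2+S\,E(v,v)+\tfrac{S^2}{8}$.

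The key step is the eigenvalue estimate for a traceless symmetric endomorphism in dimension four: $|E(v,v)|\le \tfrac{\sqrt3}{2}|E|$ for every unit $v$. To prove it, write $E(v,v)=\lambda$. Complete $v$ to an orthonormal basis; the remaining three diagonal entries sum to $-\lambda$. By Cauchy–Schwarz their squares sum to at least $\lambda^2/3$, and dropping off-diagonal terms gives $|E|^2\ge \tfrac43\lambda^2$.

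Hence, with $x=|E|$, the expression is at least
\begin{equation*}
\tfrac32x^2-\tfrac{\sqrt3}{2}|S|\,x+\tfrac{S^2}{8}=\Bigl(\sqrt{\tfrac32}\,x-\tfrac{|S|}{2\sqrt2}\Bigr)^2\ \ge 0 .
\end{equation*}
This holds because the discriminant $\tfrac34S^2-4\cdot\tfrac32\cdot\tfrac{S^2}{8}$ vanishes exactly. The only delicate point is verifying that the constants line up so that this discriminant is zero; this is the main thing to check carefully. The positivity of $S$ is used only to divide back and recover \eqref{eq:shifted-vector}.
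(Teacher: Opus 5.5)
Your proof is correct and is essentially the paper's argument. Both clear the denominator, cancel the $\Delta_g R_g$ terms using \eqref{def of Q}, and rely on the same Cauchy--Schwarz bound on the three remaining diagonal entries in a frame with $e_1=v$; your $|E|^2\ge\frac43E(v,v)^2$ is equivalent to the paper's $|\Ric_g|_g^2\ge\lambda_1^2+\frac13(R_g-\lambda_1)^2$, and where you split off the trace and complete the square in $|E|$, the paper completes the square directly in $\lambda_1=\Ric_g(v,v)$ to get $4(\lambda_1+\varepsilon/4)^2-\varepsilon^2/4$.
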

	\begin{proof}
		Choose an orthonormal frame 
        \begin{align*}
           e_1=v,\; e_2,\; e_3,\; e_4 
        \end{align*} 
		and denote $\Ric_g(e_i,e_i)$ by $\lambda_i$. Using Cauchy's inequality, one has 
			\begin{align*}
            \begin{split}
			|\Ric_g|_g^2 &= \lambda_1^2+\lambda_2^2+\lambda_3^2+\lambda_4^2\\
			&\geq \lambda_1^2+\frac{\left(\lambda_2+\lambda_3+\lambda_4\right)^2}{3}\\
			&=\lambda_1^2+\frac{(R_g-\lambda_1)^2}{3}\\
			&= \frac{4}{3}\lambda_1^2-\frac{2}{3}\lambda_1R_g+\frac{R_g^2}{3}.
            \end{split}
		\end{align*}
		Combining with the definition \eqref{def of Q}, there holds
		\begin{align*}
			\begin{split}&2(R_g+\varepsilon)
			\left(\Ric_g(v,v)-\frac{\Delta_gR_g}{2(R_g+\varepsilon)}\right)-6Q_g\\
			&\qquad=2(R_g+\varepsilon)\lambda_1+3|\Ric_g|_g^2-R_g^2\\
			&\qquad\ge4\lambda_1^2+2\varepsilon\lambda_1\\
			&\qquad=4\left(\lambda_1+\frac{\varepsilon}{4}\right)^2
			-\frac{\varepsilon^2}{4}
			\ge-\frac{\varepsilon^2}{4}.
            \end{split}
		\end{align*}
		After division by $2(R_g+\varepsilon)>0$ and using the fact $\Delta_gR_g=\Delta_g(R_g+\varepsilon)$,  we obtain our desired estimate
		\eqref{eq:shifted-vector}.
	\end{proof}

	For a positive function $f$ and a parameter $\sigma>0$, 
	Shen and Ye \cite{ShenYe} introduced the following  conformal Ricci tensor:
	\begin{equation}\label{eq:confRicdef}
		\Ric^{f,\sigma}_g
		:=\Ric_g-\sigma f^{-1}(\Delta_g f)g.
	\end{equation}
If $R_g+\varepsilon>0$, 	choosing 
\begin{align*}
    f=R_g+\varepsilon, \quad \sigma=\frac12,
\end{align*} Lemma \ref{lem:modified CGYlemma} yields that 
	\begin{equation}\label{eq:conf-Ric-R}
			\Ric^{R_g+\varepsilon,\frac{1}{2}}_g\ge\frac{3Q_g-\varepsilon^2/8}{R_g+\varepsilon}g.
	\end{equation}
    
We will use the following second-variation inequality for minimizing geodesics involving the conformal Ricci tensor, established by Shen and Ye in Lemma 1 of \cite{ShenYe}. For the reader’s convenience, we include their proof here.
	
	\begin{lemma}\label{lem:SY}
		Let $(N^n,h)$ be a Riemannian manifold with $n\geq 2$. Let
		$f\in C^\infty(N)$ satisfy $f>0$, let $\sigma\not=0$, and set
		\begin{align}
		\widetilde h=f^{2\sigma}h.
		\end{align}
		Suppose that a $\widetilde h$-geodesic $\widetilde\gamma$ is
		reparametrized by $h$-arc length $s$, denoted by $\gamma(s)$, and is
		minimizing to second order under fixed-endpoint variations. Then, for
		every smooth function $\phi$ vanishing at the endpoints, one has
		\begin{align*}
        \begin{split}
			&(n-1)\int_0^L (\phi')^2\ud s
			+\frac{n-1}{4}\int_0^L \phi^2
			\left(\frac{\ud}{\ud s}\log (f^\sigma)\right)^2\ud s
			\\
			&\qquad
			+(3-n)\int_0^L \phi\phi'
			\frac{\ud}{\ud s}\log (f^\sigma) \ud s
			\\
			&\qquad\geq
			\int_0^L \phi^2
			\Ric_h^{f,\sigma}(\dot\gamma,\dot\gamma)\ud s
			+\frac{1}{\sigma}\int_0^L \phi^2
			\left|\nabla\log (f^\sigma)\right|_h^2 \ud s.
			\label{eq:SYgeneral}
            \end{split}
		\end{align*}
	\end{lemma}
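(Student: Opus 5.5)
We will derive the inequality from the second variation formula for $\widetilde h$-length, expressed in the $h$-arclength parameter and combined with the conformal transformation law for the Ricci tensor. Throughout, set $w:=\log(f^\sigma)=\sigma\log f$, so that $\widetilde h=e^{2w}h$, and write $w':=\frac{\ud}{\ud s}w(\gamma(s))$.

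\textbf{Step 1 (second variation and choice of variation fields).} The curve $\gamma(s)$, $s\in[0,L]$, is an $h$-unit-speed reparametrization of $\widetilde\gamma$. The $\widetilde h$-arclength $\widetilde t$ satisfies $\ud\widetilde t=e^{w}\ud s$, and $T:=\dot\gamma$ has $\widetilde h$-length $e^{w}$. Let $E_1,\dots,E_{n-1}$ be an $h$-orthonormal frame of $\dot\gamma^\perp$ along $\gamma$; it is automatically $\widetilde h$-orthogonal to $\dot\gamma$. Since $\widetilde\gamma$ is minimizing to second order, the second variation of $\widetilde h$-length is nonnegative for every normal field vanishing at the endpoints:
\begin{equation*}
\int \Bigl(|\widetilde\nabla_{\widetilde t}V|^2_{\widetilde h}-\widetilde{\mathrm{Rm}}(V,\widetilde T,\widetilde T,V)\Bigr)\ud\widetilde t\ \ge 0 .
\end{equation*}
Apply this to $V_i:=e^{aw}\phi E_i$ and sum over $i$, keeping the exponent $a$ free; the natural choice will turn out to be $a=-\tfrac12$ or $a=-1$. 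Summing the curvature terms produces $\widetilde\Ric(\dot\gamma,\dot\gamma)$, which carries a weight $e^{(2a+2)w}$ times a power of $e^{w}$ from $\ud\widetilde t$. We fix $a$ so that the whole integrand becomes $e^{0}$-weighted in $\ud s$, i.e.\ no exponential factors survive.

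\textbf{Step 2 (converting to $h$-quantities).} Three formulas convert everything to $h$-data.
\begin{itemize}
\item The conformal Levi-Civita formula $\widetilde\nabla_XY=\nabla_XY+X(w)Y+Y(w)X-h(X,Y)\nabla w$ computes $\widetilde\nabla_{\dot\gamma}V_i$. It shows that the $h$-covariant derivative $\nabla_{\dot\gamma}E_i$ has normal component that can be chosen $0$ (parallel normal frame in the normal bundle). The tangential component is governed by the $h$-geodesic curvature of $\gamma$, which equals the normal part $(\nabla w)^\perp$, from the geodesic equation of $\widetilde h$.
\item The Ricci transformation law
\begin{equation*}
\widetilde\Ric=\Ric_h-(n-2)(\nabla^2w-\ud w\otimes\ud w)-(\Delta_h w+(n-2)|\nabla w|^2)h .
\end{equation*}
\item The identity along $\gamma$
\begin{equation*}
\nabla^2w(\dot\gamma,\dot\gamma)=w''-\langle\nabla w,\nabla_{\dot\gamma}\dot\gamma\rangle=w''-|(\nabla w)^\perp|^2 .
\end{equation*}
\end{itemize}

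\textbf{Step 3 (integrate by parts and regroup).} Integrate $\int\phi^2w''\,\ud s=-2\int\phi\phi' w'\,\ud s$ by parts; this generates the $\phi\phi'w'$ cross term, with net coefficient $(3-n)$ after adding the cross term from expanding $|(e^{aw}\phi)'|^2$. Then use
\begin{equation*}
\Delta_h w=\sigma\frac{\Delta f}{f}-\sigma|\nabla\log f|^2=\sigma\frac{\Delta f}{f}-\frac1\sigma|\nabla w|^2 .
\end{equation*}
This turns $-\Delta_h w$ into the $-\sigma f^{-1}\Delta f$ part of $\Ric^{f,\sigma}_h(\dot\gamma,\dot\gamma)$, plus exactly the term $\frac1\sigma|\nabla w|^2$ on the right-hand side. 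Finally, split $|\nabla w|^2=(w')^2+|(\nabla w)^\perp|^2$. The normal contributions from the geodesic curvature, from $\ud w\otimes\ud w$, and from $(n-2)|\nabla w|^2$ should cancel, leaving only $(w')^2$ terms with total coefficient $\frac{n-1}{4}$.

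\textbf{Main obstacle.} The main difficulty is the bookkeeping in Steps 2--3: tracking the tangential and normal parts of $\nabla w$ and verifying that, for the right exponent $a$, the coefficients come out exactly as $n-1$, $\frac{n-1}{4}$ and $3-n$. We will first try $a=-\tfrac12$. If leftover normal terms remain, we will instead keep $a$ free and solve the linear conditions on the coefficients. Any leftover normal-gradient terms that have a favorable sign can simply be dropped, since only an inequality is needed.
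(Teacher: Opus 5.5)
Your proposal is correct and is essentially the paper's proof. With $E_i$ $h$-orthonormal and parallel in the normal bundle, the frame $e^{-w}E_i$ is $\widetilde h$-parallel, so your choice $a=-\tfrac12$ (that is, $V_i=e^{-w/2}\phi E_i$) is exactly the paper's substitution $\psi=e^{u/2}\phi$ in the summed second-variation inequality $(n-1)\int(\ud\psi/\ud\widetilde s)^2\,\ud\widetilde s\ge\int\psi^2\Ric_{\widetilde h}(\widetilde T,\widetilde T)\,\ud\widetilde s$; the normal terms then cancel exactly to give $\Ric_{\widetilde h}(T,T)=\Ric_h(T,T)-\Delta_h w-(n-2)w''$, so the fallback to other exponents or discarding terms is unnecessary.
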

	
	\begin{proof}
		For brevity, set 
        \begin{align*}
           u:=\log (f^\sigma)=\sigma\log f. 
        \end{align*}
		Suppose that
		\begin{align*}
		\gamma:[0,L]\longrightarrow N
		\end{align*}
		is the $h$-arc length reparametrization of $\widetilde\gamma$, and write
		\begin{align*}
		T:=\dot\gamma,\quad
		|T|_h=1.
		\end{align*}
		If $\widetilde s$ denotes the $\widetilde h$-arc length parameter, then
		\begin{align*}
		\ud\widetilde s=e^u\ud s, \quad
		\widetilde T
		:=\frac{\ud\widetilde\gamma}{\ud\widetilde s}
		=e^{-u}T.
		\end{align*}
			Recall that, under the conformal change 
		\begin{align*}
		    \widetilde h=e^{2u}h,
            \end{align*}
		the Levi-Civita connections are related by (See page 58 in \cite{Besse})
		\begin{equation}\label{conformal connection}
				\widetilde\nabla_XY
			=
			\nabla_XY
			+\ud u(X)Y
			+\ud u(Y)X
			-h(X,Y)\nabla u.
		\end{equation}
		Since $\widetilde\gamma$ is a $\widetilde h$-geodesic, one has 
		\begin{align*}
		0
		=
		\widetilde\nabla_{\widetilde T}\widetilde T
		=
		e^{-2u}
		\left(\widetilde\nabla_TT-u'T\right),
		\end{align*}
		where
		\begin{align*}
        u'	=\frac{\ud}{\ud s}(u\circ\gamma).	
        \end{align*}
		Thus, we obtain
		\begin{align}
		\widetilde\nabla_TT=u'T.
		\end{align}
		On the other hand, using \eqref{conformal connection}, one has
		\begin{align*}
		\nabla_TT
		=
		\nabla u-u'T
		=
		(\nabla u)^\perp
		\end{align*}
		where $\perp$ denotes the projection to the orthogonal complement of $T$.
		It follows that
		\begin{equation}	\label{eq:hess-u-gamma}
			\Hess_hu(T,T)=	\frac{\ud^2}{\ud s^2}(u\circ\gamma)
			-\ud u(\nabla_TT)=	u''
			-\left|(\nabla u)^\perp\right|_h^2.
		\end{equation}
			The conformal transformation formula (See page 58 in \cite{Besse}) for the Ricci tensor is
		\begin{align*}\begin{split}
		\Ric_{\widetilde h}
		&=
		\Ric_h
		-(n-2)\bigl(\Hess_hu-\ud u\otimes\ud u\bigr)\\
		&\qquad-\bigl(\Delta_hu+(n-2)|\nabla u|_h^2\bigr)h.\end{split}
		\end{align*}
		Evaluating this identity on $T$, we obtain
		\begin{align*}
			\begin{split}\Ric_{\widetilde h}(T,T)
			&=
			\Ric_h(T,T)-\Delta_hu
			-(n-2)\Hess_hu(T,T)\\
			&\qquad
			+(n-2)(u')^2
			-(n-2)|\nabla u|_h^2.\end{split}
		\end{align*}
	With the help of 
    \begin{align*}
		|\nabla u|_h^2
		=
		(u')^2+\left|(\nabla u)^\perp\right|_h^2,
		\end{align*} and using \eqref{eq:hess-u-gamma}, the gradient terms cancel and hence
		\begin{equation*}
			\Ric_{\widetilde h}(T,T)
			=
			\Ric_h(T,T)-\Delta_hu-(n-2)u''.
			\label{eq:conformal-Ricci-along-gamma}
		\end{equation*}
		By the definition \eqref{eq:confRicdef}, 
		we have
		\begin{equation}\label{RicTT}
			\Ric_{\widetilde h}(T,T)
			=
			\Ric_h^{f,\sigma}(T,T)
			-(n-2)u''
			+\frac{1}{\sigma}|\nabla u|_h^2.
		\end{equation}
		
		We next use the second variation inequality, for every
		smooth function $\psi$ vanishing at the endpoints,  one has
		\begin{align*}
		(n-1)\int_0^{\widetilde L}
		\left(\frac{\ud\psi}{\ud\widetilde s}\right)^2
		\ud\widetilde s
		\geq
		\int_0^{\widetilde L}
		\psi^2
		\Ric_{\widetilde h}(\widetilde T,\widetilde T)
		\ud\widetilde s.
		\end{align*}
		By changing the variable, 
		this becomes
		\begin{align*}
		(n-1)\int_0^L e^{-u}(\psi')^2\ud s
		\geq
		\int_0^L e^{-u}\psi^2
		\Ric_{\widetilde h}(T,T)\ud s.
		\end{align*}
	Inserting \eqref{RicTT} into the above inequality, one has
		\begin{align*}\begin{split}
			(n-1)\int_0^L e^{-u}(\psi')^2\ud s
			\geq{}&
			\int_0^L e^{-u}\psi^2
			\Ric_h^{f,\sigma}(T,T)\ud s
			\\
			&-(n-2)\int_0^L e^{-u}\psi^2u''\ud s
			\\
			&+\frac{1}{\sigma}\int_0^L e^{-u}\psi^2
			|\nabla u|_h^2\ud s.
			\label{eq:index-u}\end{split}
		\end{align*}
		
		Now let   $\psi=e^{u/2}\phi$
		where $\phi$ is smooth and vanishes at the endpoints. Then,  we obtain
		\begin{align*}\begin{split}
			&(n-1)\int_0^L(\phi')^2\ud s
			+(n-1)\int_0^L u'\phi\phi'\ud s
			+\frac{n-1}{4}\int_0^L(u')^2\phi^2\ud s\\
			&\geq
			\int_0^L\phi^2
			\Ric_h^{f,\sigma}(T,T)\ud s
			-(n-2)\int_0^L\phi^2u''\ud s
			+\frac{1}{\sigma}\int_0^L\phi^2
			|\nabla u|_h^2\ud s.\end{split}
		\end{align*}
		Since $\phi$ vanishes at the endpoints, integration by parts gives
		\begin{align*}
		\int_0^L\phi^2u''\ud s
		=
		-2\int_0^L\phi\phi'u'\ud s.
		\end{align*}
		Therefore, combining these two estimates 
		\begin{align*}\begin{split}
			&(n-1)\int_0^L(\phi')^2\ud s
			+\frac{n-1}{4}\int_0^L\phi^2(u')^2\ud s
			+(3-n)
			\int_0^L\phi\phi'u'\ud s\\
			&\geq
			\int_0^L\phi^2
			\Ric_h^{f,\sigma}(T,T)\ud s
			+\frac{1}{\sigma}\int_0^L\phi^2
			|\nabla u|_h^2\ud s\end{split}
		\end{align*}
		which is our desired estimate.
	\end{proof}
	
Another ingredient is the following lemma  which is essentially one-dimensional  Allegretto–Piepenbrink theorem (see \cite{Simon, Pin} for more details). For the reader’s convenience, we sketch the proof here.

	\begin{lemma}\label{lem:positive-solution}
		Let \(V\in C^\infty((0,\infty))\) satisfying \(V\geq 0\). Assume that
		\begin{equation}\label{eq:nonnegative-operator}
			\int_0^\infty |\phi'(s)|^2\ud s
			-
			\int_0^\infty V(s)\phi(s)^2\ud s
			\geq 0,
			\qquad
			\forall\,\phi\in C_c^\infty((0,\infty)).
		\end{equation}
		Then there exists a strictly positive function
		\(y\in C^\infty((0,\infty))\) satisfying
		\begin{equation}\label{eq:positive-y}
			y''+Vy=0
			\qquad\text{on }(0,\infty).
		\end{equation}
	\end{lemma}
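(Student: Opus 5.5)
The plan is the standard exhaustion argument for the one-dimensional Allegretto–Piepenbrink theorem: build positive solutions on compact subintervals, normalize them at a fixed interior point, and pass to the limit. Fix $s_0=1\in(0,\infty)$ and an exhaustion $I_j=(1/j,\,j)$, $j\geq 2$. For each $j$ and small $\delta>0$, consider the Dirichlet problem on $I_j$ for the operator $L_\delta=-\frac{\ud^2}{\ud s^2}-V+\delta$.

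By \eqref{eq:nonnegative-operator}, the quadratic form of $L_\delta$ satisfies
\begin{equation*}
\int_{I_j}\bigl(|\phi'|^2-V\phi^2+\delta\phi^2\bigr)\ud s\ge\delta\int_{I_j}\phi^2\ud s
\end{equation*}
for all $\phi\in H_0^1(I_j)$. Here $C_c^\infty$ is dense in $H_0^1$, and $V$ is bounded on $\overline{I_j}$ because $\overline{I_j}\subset(0,\infty)$. Hence the first Dirichlet eigenvalue of $L_0$ on $I_j$ is $\geq 0$.

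From this I would produce a positive solution of $y''+Vy=0$ on $I_j$ in one of two ways.
\begin{itemize}
\item \emph{Eigenfunction route.} Take a slightly larger interval $I_j'\supset\overline{I_j}$. By domain monotonicity, the first eigenvalue $\lambda_j'$ of $L_0$ on $I_j'$ is $\ge 0$, and its first eigenfunction is positive in $I_j'$. However, it solves $y''+(V+\lambda_j')y=0$ rather than the desired equation, so an extra step is needed.
\item \emph{Sturm route (preferred).} Let $y$ be the solution of $y''+Vy=0$ on $(0,\infty)$ with $y(1/j)=0$, $y'(1/j)=1$. If $y$ vanished again at some $b<j'$, then $y$ restricted to $[1/j,b]$ would be a Dirichlet eigenfunction of $L_0$ with eigenvalue $0$ on $(1/j,b)$. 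The first eigenvalue of $L_0$ is strictly decreasing under strict enlargement of the domain and is $\ge0$ on $(1/j,j')$. So it is strictly positive on the subinterval $(1/j,b)$, contradicting that $0$ is an eigenvalue there.
\end{itemize}
Thus $y_j>0$ on $(1/j,\,j)$, and I normalize so that $y_j(1)=1$.

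For compactness, note that each $y_j$ solves a linear second-order ODE with smooth coefficients. On any compact $K=[a,b]\subset(0,\infty)$, the one-dimensional Harnack inequality for positive solutions of $y''+Vy=0$ (with $V\ge0$, $y$ is concave) gives uniform upper and lower bounds on $y_j$ on $K$ once $j$ is large.

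\emph{Concavity argument.} A positive concave function on $(1/j,j)$ with value $1$ at $1$ obeys the following bounds:
\begin{itemize}
\item an upper bound on $K$ of order $\max(b,1/a)$ up to constants;
\item a derivative bound $|y_j'|\le y_j(x)/\mathrm{dist}(x,\partial I_j)$;
\item a lower bound on $K$ of the form $\min\bigl(\tfrac{a-1/j}{1-1/j},\tfrac{j-b}{j-1}\bigr)$, which stays bounded away from $0$.
\end{itemize}
Then the ODE gives $C^k$ bounds. By Arzelà–Ascoli and a diagonal argument, $y_j\to y$ in $C^\infty_{loc}((0,\infty))$, where $y''+Vy=0$, $y\ge0$, and $y(1)=1$.

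Finally, strict positivity follows from uniqueness for the ODE: if $y(s_1)=0$ at an interior point, then since $y\ge0$ that point is a minimum, so $y'(s_1)=0$ too. Uniqueness then forces $y\equiv0$, contradicting $y(1)=1$. The main obstacle is the positivity of $y_j$ on the whole exhaustion interval, i.e. the Sturm/first-eigenvalue step. I expect it to go through via strict domain monotonicity of the first Dirichlet eigenvalue combined with the nonnegativity hypothesis \eqref{eq:nonnegative-operator}.
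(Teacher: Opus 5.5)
Your argument is correct, but you build the positive solutions on the exhaustion intervals differently from the paper.

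The paper takes the first Dirichlet eigenfunction $\phi_j>0$ of $-\frac{d^2}{ds^2}-V$ on $I_j=(1/j,j)$. This function solves the perturbed equation $\phi_j''+(V+\mu_j)\phi_j=0$ with $\mu_j\ge 0$. So the paper must also prove $\mu_j\to 0$, which it does with cutoff functions, discarding $-V\eta_j^2$ by $V\ge 0$ and using that $(0,\infty)$ has infinite length. Compactness of the normalized eigenfunctions is then left to ``standard elliptic theory''.

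You instead solve the initial value problem $y(1/j)=0$, $y'(1/j)=1$ and rule out a second zero by a Jacobi--Sturm disconjugacy argument. A zero at $b$ would make $0$ a Dirichlet eigenvalue on $(1/j,b)$, whereas strict domain monotonicity and the hypothesis give $\lambda_1(1/j,b)>\lambda_1(1/j,j')\ge 0$. This gives exact solutions of $y''+Vy=0$ on each $I_j$, so you need no eigenvalue-convergence step. In your proof $V\ge 0$ enters only through concavity, which makes your compactness bounds fully explicit. Your lower bound even passes to the limit as $y(x)\ge\min\{x,1\}$, so strict positivity follows without the uniqueness argument.

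Your route is therefore more robust. If you replace the concavity bounds by a local Harnack inequality, it works for sign-changing $V$. For example, with $V\equiv -1$ the paper's $\mu_j$ tend to $1$, not $0$, yet $e^s$ is a positive solution. The paper's route is shorter given standard spectral facts, but it relies on $V\ge 0$ for the step $\mu_j\to 0$.

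Two harmless slips: the $\delta$-perturbation and the eigenfunction route are never used. Also, $\lambda_j'\ge 0$ there follows directly from the hypothesis, not from domain monotonicity, which goes the other way.
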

	
	\begin{proof}
		For every integer \(j\geq 2\), set the interval 
		\begin{align*}
		I_j:=\left(\frac1j,j\right)
		\end{align*}
		and define
		\begin{align*}
		\mu_j
		:=
		\inf_{\substack{\phi\in H_0^1(I_j)\\ \phi\not\equiv 0}}
		\frac{
			\displaystyle
			\int_{I_j}
			\left(|\phi'|^2-V\phi^2\right)\ud x
		}{
			\displaystyle
			\int_{I_j}\phi^2\ud x
		}.
		\end{align*}
			Since $\overline{I_j}\subset(0,\infty)$,  the function \(V\) is bounded
		on \(I_j\). Moreover, for every $\phi\in C_c^\infty(I_j)$,  assumption
		\eqref{eq:nonnegative-operator} gives
		\begin{align*}
		\int_{I_j}
		\left(|\phi'|^2-V\phi^2\right)\ud x
		\geq 0.
		\end{align*}
		 Consequently,  one has \begin{align}	\mu_j\geq 0.\end{align}
		By the direct method of the calculus of variations, the infimum
		\(\mu_j\) is attained by some nonnegative  function
        \begin{align*}
        \phi_j\in H_0^1(I_j).
        \end{align*}
        We normalize
		\(\phi_j\) so that
		\begin{align*}
		\|\phi_j\|_{L^2(I_j)}=1.
		\end{align*}
		The Euler--Lagrange equation is
		\begin{equation}\label{eq:eigenfunction}
			\phi_j''+(V+\mu_j)\phi_j=0
			\qquad\text{in }I_j.
		\end{equation}
		Standard one-dimensional regularity implies that
		\begin{align*}
        \phi_j\in C^\infty(I_j).
        \end{align*} Since \(\phi_j\) is a nontrivial
		nonnegative first eigenfunction, the strong maximum principle yields
		\begin{align*}
		\phi_j(x)>0
		\qquad\text{for every }x\in I_j.
		\end{align*}
		
		Next,  we claim 
		\begin{equation}\label{eq:muj-zero}
			\lim_{j\to\infty}\mu_j=0.
		\end{equation}
		It is not hard to construct  a sequence of test functions \begin{align*}\eta_j\in C_c^\infty(I_j),\end{align*} such that
		\begin{align*}
		0\leq \eta_j\leq 1,
		\quad |\eta_j'|\leq 2, \quad 
		\eta_j\equiv 1
		\quad\text{on }[1,j-1].
		\end{align*}
	Since \(V\geq 0\), we obtain
		\begin{align*}
		0\leq \mu_j
		\leq
		\frac{
			\displaystyle
			\int_{I_j}
			\left(|\eta_j'|^2-V\eta_j^2\right)\ud x
		}{
			\displaystyle
			\int_{I_j}\eta_j^2\ud x
		}
		\leq
		\frac{
			\displaystyle
			\int_{I_j}|\eta_j'|^2\ud x
		}{
			\displaystyle
			\int_{I_j}\eta_j^2\ud x
		}\leq \frac{8}{j-2}
		\end{align*}
		which proves \eqref{eq:muj-zero} by letting $j\to\infty.$
		
		Fix the reference point \(x_0=1\). Since \(\phi_j(1)>0\), define
		\begin{align*}
		\psi_j(x):=\frac{\phi_j(x)}{\phi_j(1)}.
		\end{align*}
		Then, by using \eqref{eq:eigenfunction}, one has 
		\begin{align*}
		\psi_j(1)=1,
		\qquad
		\psi_j>0,
		\end{align*}
		and
		\begin{equation*}
			\psi_j''+(V+\mu_j)\psi_j=0
			\qquad\text{in }I_j.
		\end{equation*}
		
		With the help of standard elliptic theory and  Arzel\'a--Ascoli theorem, there exists a smooth and  nonnegative function $y(x)$ satisfies 
			\begin{align}\label{y''+Vy=0}
		y''+Vy=0
		\qquad\text{on }(0,\infty), \quad y(1)=1.
		\end{align}
		Finally, suppose that 
        \begin{equation}\label{yx*=0}
            y(x_*)=0
        \end{equation}
        for some \(x_*\in(0,\infty)\).
		Since \(y\geq 0\), the point \(x_*\) is a local minimum, and hence
		\begin{align}\label{y'x*=0}
		y'(x_*)=0.
		\end{align}
		The uniqueness theorem for the initial value problem  associated with
		\begin{align*}
		y''+Vy=0
		\end{align*}
        under \eqref{yx*=0} and \eqref{y'x*=0}
		then implies that \(y\equiv 0\), contradicting \eqref{y''+Vy=0}.  Therefore, one has 
		\begin{align*}
		y(x)>0
		\qquad\text{for every }x\in(0,\infty).
		\end{align*}
		This completes the proof.
	\end{proof}

	\section{ Strictly positivity of  $Q_g$ and $R_g$ implies compactness}\label{sec:positive-Q-and-R}

Under stronger conditions than \eqref{condition-of-Q-R} in Theorem \ref{thm:Bonnet myers}, we prove the following Bonnet-Myers type theorem.
	\begin{theorem}\label{thm:Bonnet myers-2}
		Let $(M^4,g)$ be a smooth, complete and connected   four-dimensional Riemannian manifold. If the $Q$-curvature $Q_g$ and the scalar curvature $R_g$  satisfy
        \begin{equation}\label{condtion-of-Q-2}
           Q_g\geq 6k^2 \quad  \mathrm{and}\quad   R_g\geq -12k+\varepsilon_0
        \end{equation}
        for some positive constants $k$ and $\varepsilon_0$, then $M^4$ is compact.
	\end{theorem}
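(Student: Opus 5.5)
The plan is to apply Lemma \ref{lem:modified CGYlemma} with a shift slightly smaller than $12k$ and then run the Shen--Ye second variation argument (Lemma \ref{lem:SY}) along a ray of a complete conformal metric, using Lemma \ref{lem:positive-solution} to turn the resulting stability inequality into an ODE statement that forces compactness.

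\emph{Step 1: a strictly positive conformal Ricci lower bound.} Fix $\delta\in(0,\varepsilon_0)$ and set $\varepsilon=12k-\delta$ and $f:=R_g+\varepsilon$. By \eqref{condtion-of-Q-2},
\begin{align*}
f\ge \varepsilon_0-\delta>0,\qquad 3Q_g-\frac{\varepsilon^2}{8}\ge 18k^2-\frac{(12k-\delta)^2}{8}=\frac{24k\delta-\delta^2}{8}=:c_0>0 .
\end{align*}
Then \eqref{eq:conf-Ric-R} gives $\Ric_g^{f,1/2}\ge c_0 f^{-1}g$. Let $\widetilde g=fg$, which is $\widetilde h=f^{2\sigma}h$ with $h=g$ and $\sigma=\tfrac12$. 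Since $f$ is bounded below by a positive constant, $\widetilde g\ge(\varepsilon_0-\delta)g$, so $\widetilde g$ is complete. Suppose, for contradiction, that $M^4$ is noncompact. Then there is a $\widetilde g$-ray. Reparametrize it by $g$-arclength $s\in(0,\infty)$, where the interval is infinite because $\widetilde g$ is comparable to $g$ from below.

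\emph{Step 2: a one-dimensional stability inequality.} Put $u=\tfrac12\log f$ and apply Lemma \ref{lem:SY} with $n=4$ and $\sigma=\tfrac12$. The gradient term on the right is $2\int\phi^2|\nabla u|^2\ge 2\int\phi^2(u')^2$. The cross term $-\int\phi\phi'u'$ is absorbed by Young's inequality into $\int(\phi')^2$ together with the surplus $\tfrac54\int\phi^2(u')^2$, while retaining a fixed fraction $\beta>0$ of $\int\phi^2(u')^2$. This yields constants $A,\beta>0$ such that
\begin{align*}
A\int_0^\infty(\phi')^2\,\ud s\ \ge\ \int_0^\infty\Bigl(\frac{c_0}{f}+\beta (u')^2\Bigr)\phi^2\,\ud s,\qquad \phi\in C_c^\infty((0,\infty)).
\end{align*}
Lemma \ref{lem:positive-solution}, applied with $V=A^{-1}\bigl(c_0/f+\beta(u')^2\bigr)\ge0$, then gives a positive solution of $y''+Vy=0$ on $(0,\infty)$.

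\emph{Step 3: contradiction from the ODE.} Since $y>0$ and $y''=-Vy\le0$, $y$ is concave, so $y'\ge0$ and $y'(s)\le y(s)/s$. The Riccati function $w=y'/y$ satisfies $w'=-V-w^2$, and integrating gives the Hardy-type decay
\begin{align*}
\int_s^\infty V\,\ud t\le w(s)\le \frac1s .
\end{align*}
From the $(u')^2$ part of $V$ and Cauchy--Schwarz on dyadic intervals $[s,2s]$, $\log f$ oscillates by at most a fixed constant $C_1$ per dyadic block, so $f(s)\lesssim s^{a}$ for some exponent $a$. From the $c_0/f$ part, $\int_s^{2s}f^{-1}\,\ud t\le A/(c_0 s)$, which forces $f\gtrsim s^2$ on average over each block. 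Combining the two bounds requires $a\ge2$, which should be impossible.

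\emph{Main obstacle.} I expect Step 3 to be the hardest. The crude argument above gives a growth exponent $a$ depending on $\beta$ and $A$, and the resulting contradiction is not automatic. To close it, I would first sharpen the use of the $\beta(\log f)'^2$ term. One option is to substitute $\phi=f^{\gamma}\eta$ in the stability inequality to weight it and produce an exact Hardy-type mismatch. Another is to reparametrize by $\widetilde g$-arclength, in which the potential becomes roughly $c_0 f^{-2}$, and tune $\gamma$ so that the constants beat the sharp Hardy constant $\tfrac14$. If that still fails, I would exploit the freedom in choosing $\delta$, which trades $c_0$ against the lower bound on $f$, so that the constants fall on the favorable side.
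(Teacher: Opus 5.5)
Your Steps 1 and 2 reproduce the paper's setup: a shift $\varepsilon$ just below $12k$, the complete metric $(R_g+\varepsilon)g$, a ray parametrized by $g$-arclength, Lemma \ref{lem:SY}, and then Lemma \ref{lem:positive-solution}. There are two small slips. You need $\delta<\min\{\varepsilon_0,24k\}$ for $c_0>0$. The $g$-parameter interval is infinite because of $g$-completeness (a ray of finite $g$-length would converge in $M^4$), not because $\widetilde g\ge(\varepsilon_0-\delta)g$, which bounds $g$-length from above.

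The genuine gap is Step 3, the heart of the proof, which you leave open. It cannot be closed by dyadic bookkeeping or by tuning $\delta$, because the reduced one-dimensional problem is critical. Write $V=\beta'(u')^2+c'f^{-1}$. The best Young split, $-\phi\phi'u'\le(\phi')^2+\frac14(u')^2\phi^2$, gives $4\int(\phi')^2\ge\int\bigl((u')^2+c_0f^{-1}\bigr)\phi^2$, i.e.\ $\beta'=\frac14$; no other Young parameter does better. If $\beta'<\frac14$ the reduced inequality yields no contradiction at all. Take $f=K(1+s)^{2\lambda}$ with $1<\lambda<(4\beta')^{-1/2}$ and $K$ large; then $V\le\frac{1}{4s^2}$, so Hardy's inequality shows the quadratic form is nonnegative. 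So any argument keeping only ``some fraction'' of $(u')^2$, or losing constants in oscillation and averaging estimates, must fail. Even with $\beta'=\frac14$, your dyadic bounds only give $f(2s)\le e^{4}f(s)$, i.e.\ $f(s)\le Cs^{4/\log 2}$. That is compatible with $f\ge cs^2$ on average, so nothing is contradicted.

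The missing idea is an exact monotone quantity that uses $\beta'=\frac14$ with no loss. With $V=\frac14(u')^2+\frac{c_0}{4}f^{-1}$, set $p=y'/y\ge0$ (the function you called $w$). Put $w:=f^{-1/2}$, so that $u'=-w'/w$, and $a:=p/w\ge0$. From $p'+p^2+V=0$ one gets
\[
a'=-\frac{1}{4w}\Bigl(\frac{w'}{w}+2aw\Bigr)^2-\frac{c_0}{4}\,w\le 0,
\]
because $p^2+p\,\frac{w'}{w}+\frac14\bigl(\frac{w'}{w}\bigr)^2$ is a perfect square. This is exactly where $\beta'=\frac14$ is needed.

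Since $a$ is nonnegative and nonincreasing, integrating from $1$ to $\infty$ gives $\int_1^\infty w\,\ud s\le 4a(1)/c_0$ and $\int_1^\infty w^{-1}\bigl(\frac{w'}{w}+2aw\bigr)^2\ud s\le 4a(1)$. Cauchy--Schwarz then puts $\frac{w'}{w}+2aw$ in $L^1((1,\infty))$. Also $aw\in L^1$ because $a\le a(1)$. Hence $(\log w)'\in L^1$, so $w\to w_\infty>0$, which contradicts $w\in L^1((1,\infty))$. This is precisely the paper's argument, with $z=2u$ and $c_0=3c$.

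Your suggested alternatives (weights $\phi=f^\gamma\eta$, $\widetilde g$-arclength) are not developed far enough to produce this sharp mismatch. Varying $\delta$ cannot help, since the obstruction sits in the $(u')^2$ coefficient, not in $c_0$.
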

\begin{proof} 
Based on the assumption \eqref{condtion-of-Q-2}, we  choose a constant
		$\varepsilon$ such that
        \begin{align*}
           \max\left\{0,12k-\frac{\varepsilon_0}{2}\right\}<\varepsilon<12k. 
        \end{align*}
		Then, one has 
		\begin{equation}\label{eq:R+epsilon}
		R_g+\varepsilon\ge \frac{\varepsilon_0}{2}>0
		\end{equation}
		and then 
		\begin{equation}\label{Q_g-lower-bound}
		    Q_g -\frac{\varepsilon^2}{24}
		\ge6k^2-\frac{\varepsilon^2}{24}=:c>0.
		\end{equation}
Under \eqref{eq:R+epsilon} and the completeness of $g$,  it is easy to see that  the conformal metric 
\begin{equation*}
    \tilde g= (R_g+\varepsilon)g
\end{equation*}
is also complete.

We claim that $M^4$ must be compact and argue by contradiction.

If $M^4$ is  noncompact, by the Hopf-Rinow theorem, $(M^4,\widetilde g)$ admits a
minimizing ray. Reparametrize this ray by $g$-arc length and denote it by
\begin{align}
\gamma:[0,\infty)\to M^4,
\qquad |\dot\gamma|_g=1.
\end{align}
The $g$-parameter interval is indeed infinite. Otherwise the ray would
have finite $g$-length, and the completeness of $g$ would force it to
converge to a point of $M^4$, contradicting the fact that a
$\widetilde g$-ray escapes every compact set.

Apply Lemma \ref{lem:SY} with
\begin{align*}
n=4,
\quad f=R_g+\varepsilon,
\quad \sigma=\frac12,
\quad \tilde g=(R_g+\varepsilon)g.
\end{align*}
Set	\begin{align*}z(s):=\log (R_g(\gamma(s))+\varepsilon).\end{align*}
Then,  for every
$\phi\in C_c^\infty((0,\infty))$, one has 
\begin{align*}\begin{split}
	&3\int(\phi')^2
	+\frac{3}{16}\int(z')^2\phi^2
	-\frac12\int z'\phi\phi'
	\\&\qquad\ge{}
	\int \Ric^{R_g+\varepsilon,\frac12}_g
(\dot\gamma,\dot\gamma)\phi^2+\frac12\int|\nabla z|_g^2\phi^2
\end{split}
\end{align*}
where  we omit $\ud s$ for brevity if no confusion causes.

Using the estimate \begin{align}|\nabla z|_g^2\ge(z')^2, \end{align}\eqref{eq:conf-Ric-R} and \eqref{Q_g-lower-bound}, we find  that 
\begin{equation}\label{eq:non-negative operator}
	3\int(\phi')^2
	-\frac12\int z'\phi\phi'
	-\frac5{16}\int(z')^2\phi^2
	\geq 3c\int e^{-z}\phi^2.
\end{equation}

Applying Young's inequality, one has
\begin{align}\label{Young's ineqaulity}
-\frac12z'\phi\phi'
\le (\phi')^2+\frac1{16}(z')^2\phi^2.
\end{align} 
Combining with \eqref{eq:non-negative operator}, we obtain that,
for every $\phi\in C_c^\infty((0,\infty))$, 
\begin{equation}
	\int_0^\infty(\phi')^2\ud s
	\ge
	\int_0^\infty
	\left(\frac1{16}(z')^2+\frac{3c}{4}e^{-z}\right)
	\phi^2\ud s
\end{equation}
Set a positive function
\begin{align*}
    V:=\frac1{16}(z')^2+\frac{3c}{4}e^{-z}>0.
\end{align*}
Using  Lemma \ref{lem:positive-solution}, there exists a smooth function $y>0$ satisfying
	\begin{equation}\label{eq:yODE}
		y''+Vy=0.
	\end{equation}
	Since $V>0$, we have $y''=-Vy<0$, so $y$ is strictly concave. A concave
	function that remains positive on the whole half-line must satisfy
\begin{equation}\label{y'geq0}
    y'\ge0.
\end{equation}
 Indeed, if $y'<0$ at some point, then the monotonicity of $y'$
	would force $y$ to cross zero in finite time.
	
Then,  we set	\begin{equation*}
p:=\frac{y'}{y}\ge0, \quad	w:=e^{-z/2}>0.\end{equation*}
	Combining with \eqref{eq:yODE}, we obtain that 
	\begin{align*}
p'+p^2+\frac{(w')^2}{4w^2}+\frac{3c}{4}w^2=0.
	\end{align*}
	Now,   we set
    \begin{equation*}
        a:=\frac{p}{w}\ge0.
    \end{equation*}
A direct computation gives
\begin{equation}	\label{eq:aprime}
	a'=\frac{p'}w-\frac{pw'}{w^2}=-\frac1{4w}
	\left(\frac{w'}w+2aw\right)^2-\frac{3c}{4}w.
\end{equation}
	Thus $a$ is nonincreasing and nonnegative.  Integrating
	\eqref{eq:aprime} from $1$ to infinity, we get
	\begin{equation}\label{eq:wL1}
		\int_{1}^\infty w\ud s
		\le\frac{4}{3c}a(1)<\infty,
	\end{equation}
	and
	\begin{equation}\label{eq:bL2}
		\int_{1}^\infty
		\frac1w
		\left(\frac{w'}w+2aw\right)^2\ud s
		\le4a(1)<\infty.
	\end{equation}
Using  H\"older's inequality,
	\eqref{eq:wL1}, and \eqref{eq:bL2}, there holds
	\begin{align*}
		\begin{split}&\quad\int_{1}^\infty\left|\frac{w'}w+2aw \right|\ud s\\
		&\leq
		\left(\int_{1}^\infty \frac{1}{w}\left(\frac{w'}w+2aw\right)^2 \ud s\right)^{1/2}
		\left(\int_{1}^\infty w \ud s\right)^{1/2}<\infty.
    \end{split}
	\end{align*}
	Since $a$ is nonnegative and nonincreasing, it is bounded. Together with
	$w\in L^1((1,\infty))$, this gives
$aw\in L^1((1,\infty)).$
	Then, one has
	$\frac{w'}{w}\in L^1((1,\infty)).$
 Hence $\log w(s)$ converges to a
	finite real number as $s\to\infty$, and then
	\begin{align}
	w(s)\longrightarrow w_\infty>0, \quad \mathrm{as}\; s\to\infty
	\end{align}
    for some positive constant $w_\infty$ which  contradicts \eqref{eq:wL1}.	Therefore, $M^4$ must be  compact.

Thus, we finish the proof of  Theorem
	\ref{thm:Bonnet myers-2}. 
\end{proof}

    As we have shown during the proof of Theorem \ref{thm:Bonnet myers-2}, although we only assume that
    \begin{align*}
       R_g\geq -12k+\varepsilon_0,
    \end{align*} the structure of $Q$-curvature together with its lower bound actually  forces $R_g$ to have  a sharp positive lower bound. This observation plays a crucial role in the proof of gap phenomenon for $R_g$ in the next section. For the reader's convenience, we state this as a corollary.
\begin{corollary}\label{cor:R_g-lower-bound}
    Under the same assumptions of Theorem \ref{thm:Bonnet myers-2}, one has
    \begin{align*}R_g\geq 12k.\end{align*}
\end{corollary}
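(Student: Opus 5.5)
The plan is to combine the compactness already given by Theorem \ref{thm:Bonnet myers-2} with a maximum-principle argument at a minimum point of $R_g$. The key input is the algebraic structure of $Q_g$ in \eqref{def of Q}. By Theorem \ref{thm:Bonnet myers-2}, $M^4$ is compact, so $R_g$ attains its minimum at some point $p\in M^4$. At $p$ we have $\nabla R_g(p)=0$ and $\Delta_g R_g(p)\geq 0$, so $-\Delta_g R_g(p)\le 0$.

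Next, control the Ricci term pointwise. As in the proof of Lemma \ref{lem:modified CGYlemma}, the Cauchy inequality applied to the Ricci eigenvalues gives $|\Ric_g|_g^2\geq R_g^2/4$. Inserting both facts into \eqref{def of Q} at $p$ gives
\begin{align*}
6k^2\le Q_g(p)=\frac16\left(-\Delta_gR_g-3|\Ric_g|_g^2+R_g^2\right)(p)\le \frac16\left(R_g^2-\frac34R_g^2\right)(p)=\frac{R_g(p)^2}{24}.
\end{align*}
Hence $R_g(p)^2\geq 144k^2$, that is, $|R_g(p)|\ge 12k$.

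Finally, exclude the negative branch. By assumption \eqref{condtion-of-Q-2}, $R_g(p)\geq -12k+\varepsilon_0>-12k$, so $R_g(p)\le -12k$ is impossible. Therefore $\min_{M^4}R_g=R_g(p)\geq 12k$, which is the claim.

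The main (and essentially only) nontrivial ingredient is the compactness from Theorem \ref{thm:Bonnet myers-2}, which guarantees that the minimum is attained. Without it one would need an Omori--Yau type argument in place of the pointwise maximum principle. Since the theorem is available, the remaining steps are routine.
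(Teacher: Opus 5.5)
Your proposal is correct and follows essentially the same route as the paper. Both arguments get compactness from Theorem \ref{thm:Bonnet myers-2} and then evaluate \eqref{def of Q} at a minimum point of $R_g$, using $\Delta_g R_g\ge 0$ and $|\Ric_g|_g^2\ge R_g^2/4$ (the paper writes this as $6Q_g=-\Delta_gR_g-3|E|_g^2+R_g^2/4$) to get $|R_g|\ge 12k$ there, after which $R_g\ge -12k+\varepsilon_0$ excludes the negative branch.
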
	
\begin{proof}
Using Theorem \ref{thm:Bonnet myers-2}, $M^4$ is compact.
    Let $x_0$ be a minimum point of $R_g$. Rewrite
		\eqref{def of Q} as
        \begin{align*}
        6 Q_g= -\Delta_g R_g-3\left|\Ric_g-\frac{R_g}{4}g\right|_g^2+\frac{R_g^2}{4}.\end{align*}
		At the minimum point $x_0$, one has $\Delta_gR_g(x_0)\ge0$ and then one has
		\begin{align*}
        36k^2\leq 6 Q_g(x_0)\le\frac{1}{4}R_g(x_0)^2.
		\end{align*}
		Hence, we obtain 
        $$|R_g(x_0)|\geq 12k.$$ 
        Based on our assumption \eqref{condtion-of-Q-2}, one has 
        \begin{equation*}
           R_g(x_0)\geq -12k+\varepsilon_0>-12k. 
        \end{equation*}
    Thus, one has 
		\begin{equation*}
		R_g(x_0)\geq 12k. 
		\end{equation*} 
        Thus, we finish the proof.
\end{proof}

\begin{prop}\label{pro:p_1-formula}
 Let $(M^4,g)$ be a  compact four-dimensional Riemannian manifold  satisfying
    \eqref{eq:positive-Q-and-Y}. 
    Then, the fundamental group $\pi_1(M^4)$ is finite and satisfies \eqref{pi_1-equation}.
\end{prop}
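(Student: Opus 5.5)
By \eqref{eq:conformal-postive-Ricci}, the Chang--Gursky--Yang theorem, there is a conformal metric $\tilde g\in[g]$ with $\Ric_{\tilde g}>0$. Since $M^4$ is compact, $\Ric_{\tilde g}\ge \delta\tilde g$ for some $\delta>0$, and the Bonnet--Myers theorem applies. Let $\pi:\widetilde M^4\to M^4$ be the universal cover with the pullback metric $\pi^*\tilde g$. This metric is complete and still satisfies $\Ric\ge\delta\,\pi^*\tilde g$, so $\widetilde M^4$ is compact. Hence the covering is finite-sheeted and its number of sheets equals $|\pi_1(M^4)|<\infty$.

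For the quantitative bound \eqref{pi_1-equation}, we work on the compact cover $(\widetilde M^4,\pi^*g)$. Since $\pi$ is a local isometry, $Q_{\pi^*g}=Q_g\circ\pi$ and $R_{\pi^*g}=R_g\circ\pi$. Integrating over the $|\pi_1(M^4)|$ sheets therefore gives
\begin{equation*}
\int_{\widetilde M^4}Q_{\pi^*g}\ud\mu_{\pi^*g}=|\pi_1(M^4)|\int_{M^4}Q_g\ud\mu_g.
\end{equation*}
Gursky's inequality \eqref{Gursky upper bound} requires $Y(\widetilde M^4,[\pi^*g])\ge0$. To verify this, note that $Y(M^4,[g])>0$ gives, after solving the Yamabe problem on $M^4$, a metric $\hat g\in[g]$ with $R_{\hat g}>0$. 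Then $\pi^*\hat g\in[\pi^*g]$ has positive scalar curvature, and on a compact manifold this forces the Yamabe invariant to be positive. Alternatively, we can pull back Lee's metric \eqref{postiveQandR}. Applying \eqref{Gursky upper bound} on $\widetilde M^4$ then yields
\begin{equation*}
|\pi_1(M^4)|\int_{M^4}Q_g\ud\mu_g\le 16\pi^2.
\end{equation*}
Since $\int_{M^4}Q_g\ud\mu_g>0$ by \eqref{eq:positive-Q-and-Y}, dividing gives \eqref{pi_1-equation}.

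The only delicate point is the transfer of the hypotheses to the cover. The sign of the Yamabe invariant is not automatically preserved under finite coverings in an obvious way, so it is safest to argue through the existence of a positive scalar curvature representative in the conformal class, as above. The total $Q$-curvature, by contrast, multiplies exactly by the number of sheets, so no conformal invariance needs to be invoked for it. Everything else is a direct citation of \cite{CGY} and \cite{Gursky CMP}.
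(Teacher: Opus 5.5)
Your proof is correct and follows the paper's route: Chang--Gursky--Yang gives a conformal metric with $\Ric>0$, Bonnet--Myers makes the universal cover compact, and Gursky's inequality on the cover yields \eqref{pi_1-equation}. The only difference is cosmetic. You integrate $Q$ for $\pi^*g$, so no conformal invariance of $\int Q$ is needed, and you check $Y(\widetilde M^4,[\pi^*g])>0$ explicitly; the paper instead integrates $Q$ for the lifted positive-Ricci metric, for which the Yamabe condition on the cover is automatic.
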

\begin{proof}
    Under our assumptions, Corollary B  of \cite{CGY} shows that there exists a conformal metric $ g_1\in [g]$ such that 
\begin{align*}
\Ric_{g_1}>0, \quad \forall x\in M^4
\end{align*} 
Since $M^4$ is compact, there exists a positive constant $d_1$ such that 
\begin{equation}\label{Ric_g_1geqd1g}
    \Ric_{g_1}\geq d_1 g_1.
\end{equation}
Consider the universal cover of $M^4$ denoted as $\widetilde M^4$:
\begin{align*}
\pi: (\widetilde M^4, \tilde g_1)\longrightarrow (M^4,g_1).
\end{align*}
Then, the lower bound \eqref{Ric_g_1geqd1g} for the Ricci tensor of  $(\widetilde M^4, \tilde g_1)$ stills holds.
Applying Bonnet-Myers theorem, $\widetilde M^4$ is compact. Then, using the conformal invariant property of $Q$-curvature integral \eqref{eq:Q-conformal-invar} and the  Gursky's inequality \eqref{Gursky upper bound} (See Theorem B in \cite{Gursky CMP}), one has 
\begin{align*}\begin{split}
&\quad|\pi_1(M^4)|\int_{M^4}Q_g\ud\mu_g\\
&=|\pi_1(M^4)|\int_{M^4}Q_{g_1}\ud\mu_{g_1}\\
&=\int_{\widetilde M^4}Q_{\tilde g_1}\ud\mu_{\tilde g_1}\leq 16\pi^2,\end{split}\end{align*}
which finishes our proof.
\end{proof}

{\bf Proof of Corollary \ref{cor:pi_1-control-for-Q}:}
Using Theorem \ref{thm:Bonnet myers-2},  $M^4$ is compact. Combining with \eqref{cor:R_g-lower-bound}, one has $R_g>0.$ Then, $(M^4,g)$ satisfies \eqref{eq:positive-Q-and-Y}. Then, our conclusion follows from Proposition \ref{pro:p_1-formula}. \qed

\section{Completeness of conformal metric and $R_g$ gap phenomenon}\label{sec:compelte-metric}

In this section, we now turn to the critical  assumption
\begin{equation}\label{Q-R-lower-bound}
    Q_g\ge6k^2,\qquad R_g\ge-12k
\end{equation}
where $k$ is  a constant satisfying 
$$k\geq 0.$$
For brevity, set the following notations
\begin{equation}\label{eq:SFE}
S:=R_g+12k\ge0,
\qquad
F:=Q_g-6k^2\ge0,
\qquad
E:=\Ric_g-\frac{R_g}{4}g.
\end{equation}
Using the definition \eqref{def of Q}, a direct computation shows that 
\begin{equation}\label{eq:S-equation}
\Delta_g S
=
-6kS+\frac14S^2-3|E|_g^2-6F.
\end{equation}

The following lemma is motivated by Lemma 2.1 of \cite{GM}, originally due to Gursky and Malchiodi. Although elementary, it is quite subtle and plays a vital role in the proof of the strong maximum principle for the Paneitz operator. For further details, we refer the reader to \cite{GM}.
\begin{lemma}\label{lem:GM-lemma}
Suppose that  \eqref{Q-R-lower-bound} holds  and the constant $k\geq0$.  If \(R_g(p)=-12k\) at some point \(p\in M^4\), then one has
\begin{align*}
 \Ric_g\equiv-3kg.
\end{align*}
\end{lemma}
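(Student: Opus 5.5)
The plan is to apply a strong maximum principle argument to $S=R_g+12k\ge 0$ using equation \eqref{eq:S-equation}. Since $F\ge0$ and $|E|^2\ge0$, that equation gives the differential inequality
\begin{equation*}
\Delta_g S+6kS-\tfrac14 S^2 = -3|E|_g^2-6F\le 0 .
\end{equation*}
So $S$ is a nonnegative supersolution of a linear equation
\begin{equation*}
\Delta_g S + c(x) S\le 0, \qquad c(x):=6k-\tfrac14 S,
\end{equation*}
where $c$ is smooth and hence locally bounded. I will not need a sign on $c$.

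\textbf{Step 1: $S\equiv 0$ near $p$.} I will use the Hopf strong maximum principle for nonnegative supersolutions, in the form valid for arbitrary bounded zeroth-order coefficients: if $S\ge0$ satisfies $\Delta S + cS\le 0$ and vanishes at an interior point, then $S\equiv0$ on the connected component. One way to see this is to write $c=c^+-c^-$. Then $\Delta S - c^- S\le -c^+S\le 0$, and the classical principle applies to $L=\Delta - c^-$, since its zeroth-order coefficient is nonpositive and the minimum value is $0$. Since $S(p)=0$ is a minimum, the zero set of $S$ is open. It is also closed, and $M^4$ is connected, so $S\equiv0$ on $M^4$, i.e.\ $R_g\equiv -12k$.

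\textbf{Step 2: identify the Ricci tensor.} With $S\equiv0$ we have $\Delta_g S=0$, so \eqref{eq:S-equation} becomes $0=-3|E|_g^2-6F$. Since both terms are nonpositive, $E\equiv0$ and $F\equiv0$. Then $E=0$ gives $\Ric_g=\frac{R_g}{4}g=-3kg$, which is the claim. As a byproduct, $Q_g\equiv 6k^2$.

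\textbf{Main obstacle.} The only delicate point is justifying the strong maximum principle when $c=6k-S/4$ has no sign and $S$ may not be bounded globally. This is local, so boundedness of $c$ on compact neighborhoods suffices, and the decomposition above handles the sign. If a more hands-on route is preferred, in the spirit of Gursky--Malchiodi, I would instead argue locally. Near $p$, $S$ is small, so $c\ge -1$ on a small ball, and $S$ satisfies $\Delta S - S\le \Delta S + cS \le 0$ wherever $c\ge -1$. The standard Hopf barrier argument for $\Delta-1$ then shows that the zero set is open.
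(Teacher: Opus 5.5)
Your proposal is correct and follows the paper's proof: a strong maximum principle for $S=R_g+12k$ based on \eqref{eq:S-equation}, an open--closed argument on the zero set of $S$, and then $E\equiv0$, $F\equiv0$ read off from the equation. The only difference is cosmetic: you absorb the indefinite sign of $c=6k-S/4$ via $c=c^+-c^-$ and so treat $k>0$ and $k=0$ uniformly, whereas the paper splits into cases (for $k>0$ it restricts to a neighborhood where $S<24k$, on which $S$ is superharmonic, and for $k=0$ it uses $c=-S/4\le0$ directly).
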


\begin{proof}
First, if $k>0$, since \(S\ge0\) and \(S(p)=0\), there is a connected neighborhood
\(U\) of \(p\) on which \(S<24k\). On \(U\), using \eqref{eq:S-equation}, one has 
\begin{align*}
\Delta_g S
=
-\left(6k-\frac S4\right)S-3|E|_g^2-6F
\le0.
\end{align*}
Thus,  \(S\) is superharmonic and attains its minimum zero at an interior
point. The strong maximum principle yields that  \(S\equiv0\) on \(U\).
Hence the zero set of \(S\) is both open and closed. Since \(M^4\) is
connected, \(S\equiv0\) on \(M^4\). Then, the equation
\eqref{eq:S-equation}  gives \(E\equiv0\) and \(F\equiv0\). Returning back to \eqref{eq:SFE}, one has
$$\Ric_g\equiv -3kg.$$

Second, if $k=0$, one has
\begin{align*}
    R_g\geq 0,  \quad  -\Delta_g R_g+\frac{R_g^2}{4}\geq 0.
\end{align*}
Since $R_g(p)=0,$  the strong maximum principle yields that $R_g\equiv 0$. Then, the equation
\eqref{eq:S-equation} yields $E\equiv 0$ and $Q_g\equiv 0.$ Then, $\Ric_g\equiv 0.$

Thus, we finish our proof.
\end{proof}

From now on, we assume that 
\begin{equation}\label{eq:Fgeq0-S-positive}
F\geq 0, \quad S>0.
\end{equation} 
Consider the conformal metric 
\begin{equation}\label{eq:def-of-gS}
g_{S}:=Sg.
\end{equation}
Taking $\varepsilon=12k$ in Lemma \ref{lem:modified CGYlemma}, we obtain
\begin{equation}\label{eq:B-nonnegative}
\Ric_g^{S,1/2}=
\Ric_g-\frac{\Delta_gS}{2S}g
\ge
3\frac{Q_g-6k^2}{S}g
=
3\frac FSg
\ge0.
\end{equation}

The first issue is that \(S\) may tend to zero at infinity, so
completeness of \(g_S\) is not automatic.  We will establish a lemma about completeness of \(g_S\) for subsequent use.

We first introduce some notations. Let  $d_{g}(p,x)$ denote the the $g$-geodesic distance between $x$ and $p$ and   $B_g(p,\rho)$ denote the geodesic ball with respect $g$ centering at $p$ with radius $\rho$.

To avoid notational clutter, we shall employ the same symbols  as before throughout the proof, provided that no ambiguity arises.

\begin{lemma}\label{lem:g_S-complete}
Suppose that \eqref{eq:Fgeq0-S-positive} holds. Then, the conformal  metric
$g_S$ from \eqref{eq:def-of-gS} is  complete.
\end{lemma}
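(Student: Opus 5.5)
The plan is to argue by contradiction, running the same one-dimensional argument as in the proof of Theorem \ref{thm:Bonnet myers-2}, now with $c=0$. Suppose $g_S$ is incomplete. Then there is a curve leaving every compact set of $M^4$ with finite $g_S$-length. Parametrized by $g$-arc length, its parameter interval is $[0,\infty)$ because $g$ is complete, and
\[
\int_0^\infty \sqrt{S(\gamma(s))}\,\ud s<\infty .
\]
So it suffices to show that along suitable $g_S$-minimizing curves the quantity $z=\log S$ cannot decay fast enough to make $e^{z/2}$ integrable on a half-line.

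\emph{Step 1: regularize so that minimizers exist.} Because $g_S$ may be incomplete, minimizing $g_S$-geodesics need not exist. For $\delta>0$ I would use $f_\delta=S+\delta$. The metric $g_\delta=(S+\delta)g$ is complete, since $S+\delta\ge\delta$, so $g_\delta$-minimizing rays from a fixed point $p$ exist by Hopf--Rinow. Lemma \ref{lem:modified CGYlemma} with $\varepsilon=12k+\delta$ gives
\[
\Ric_g^{S+\delta,1/2}\ge \frac{3F-3k\delta-\delta^2/8}{S+\delta}\,g\ \ge -\Big(3k+\frac{\delta}{8}\Big)\frac{\delta}{S+\delta}\,g .
\]
This error is at most $O(\delta)$ times a bounded factor, and it tends to zero as $\delta\to0$ on any region where $S$ is bounded below. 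Alternatively, I would try to work directly with $g_S$-minimizing segments between $p$ and points $x_j$ of a Cauchy sequence escaping to infinity, and pass to the limit.

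\emph{Step 2: the Hardy-type inequality.} Along the minimizer, parametrized by $g$-arc length, set $z=\log(S+\delta)$. Apply Lemma \ref{lem:SY} with $n=4$, $\sigma=1/2$, then $|\nabla z|^2\ge (z')^2$ and the Young inequality \eqref{Young's ineqaulity}, exactly as in Section \ref{sec:positive-Q-and-R}. The resulting inequality, for compactly supported $\phi$ on the interval, is
\[
\int(\phi')^2\ \ge\ \int\Big(\frac1{16}(z')^2-C\delta e^{-z}\Big)\phi^2 .
\]
Letting $\delta\to0$ on compact subintervals yields $\int(\phi')^2\ge\frac1{16}\int (z')^2\phi^2$ along the limiting $g_S$-minimizing ray. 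Lemma \ref{lem:positive-solution} then produces $y>0$ with $y''+\frac1{16}(z')^2y=0$ on $(0,\infty)$.

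\emph{Step 3: decay at most quadratic, and the main obstacle.} The crucial point is quantitative. Since $y>0$ is concave on a half-line, $y'\ge0$, and $p=y'/y$ satisfies the Riccati equation $p'+p^2+\frac1{16}(z')^2=0$. The Hardy constant $1/4$ then forces the potential to be at most $\frac1{4s^2}$ in an averaged sense. Concretely, integrating the Riccati equation with the weight $s^2$, or testing with $\phi=s^{1/2}\eta$, should give
\[
\int_1^T \frac{(z')^2}{16}\,s\,\ud s\le \frac14\log T+C,
\]
and hence, by Cauchy--Schwarz,
\[
z(s)\ge -2\log s - o(\log s)\quad\text{or at least}\quad \int_1^T |z'|\,\ud s\le 2\log T+C .
\]
Then $e^{z/2}\gtrsim s^{-1-o(1)}$, which is not quite enough. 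So the main obstacle is making this borderline estimate sharp enough to conclude $\int^\infty e^{z/2}\,\ud s=\infty$.

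My first attempt would be to substitute $w=e^{-z/2}$ and redo the monotone-quantity computation of \eqref{eq:aprime} with $c=0$. There one has $a=p/w$ nonincreasing and nonnegative, hence bounded, so
\[
\frac{p}{\sqrt S}=\frac{y'}{y\sqrt S}
\]
is bounded. Combining this with the identity
\[
\Big(\frac{w'}{w}+2aw\Big)^2\le -4w\,a'
\]
and the assumed finiteness of $\int e^{z/2}=\int w^{-1}$, I would try to show that $\frac{w'}{w}$ is integrable in a weighted sense. That would bound $z$ below by a constant, or by $-2\log s$ with a summable correction, contradicting $\int_0^\infty\sqrt S\,\ud s<\infty$.
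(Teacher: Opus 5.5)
\textbf{Gap in Step 3.} The proof is not completed at the decisive step. You leave Step 3 open, and neither route you sketch there closes it.

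Your weighted Riccati/Hardy bound only gives $\int_1^T|z'|\,\ud s\le 2\log T+C$, i.e.\ $e^{z/2}\gtrsim s^{-1-o(1)}$. As you note yourself, this does not contradict $\int^\infty e^{z/2}\,\ud s<\infty$.

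In the monotone-quantity attempt the weight is on the wrong side. With $w=e^{-z/2}=S^{-1/2}$ one gets $a=p/w=p\sqrt S$, not $p/\sqrt S$ as you write. This $a$ is indeed nonincreasing, since $a'=-e^{z/2}(p-\frac14 z')^2$. But integrating yields only $\int_1^\infty e^{z/2}(p-\frac14 z')^2\,\ud s\le a(1)$. Turning this into $L^1$ control of $p-\frac14 z'$ by H\"older would require $\int^\infty e^{-z/2}\,\ud s<\infty$. That is false, because Cauchy--Schwarz gives $\int_0^T S^{-1/2}\,\ud s\ge T^2/\int_0^T S^{1/2}\,\ud s\to\infty$. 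So the ingredients you list do not combine, and the weighted integrability of $w'/w$ you hope for is never established.

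\textbf{The missing idea.} The potential $\frac1{16}(z')^2=\frac14(w'/w)^2$ is unchanged under $w\mapsto w^{-1}$. So you are free to take $w:=e^{z/2}=\sqrt{S\circ\gamma}$, which is exactly the function that is integrable by hypothesis. Then \eqref{eq:aprime} with $c=0$ holds verbatim for $a=p/w=y'/(y\sqrt S)$. This is the paper's quantity $A$ in \eqref{eq:A-critical}, with $A'=-e^{-z/2}(P+\frac14 z')^2\le 0$. The argument then runs as follows:
\begin{itemize}
\item Integrating gives $\int_1^\infty e^{-z/2}(P+\frac14 z')^2\,\ud s\le A(1)$.
\item H\"older against $\int e^{z/2}\,\ud s<\infty$ then gives $P+\frac14 z'\in L^1$.
\item Since $0\le A\le A(1)$ on $[1,\infty)$, also $\int_1^\infty P\,\ud s=\int_1^\infty Ae^{z/2}\,\ud s<\infty$.
\item Hence $z'\in L^1$, so $z$ has a finite limit and $e^{z/2}$ is bounded below near infinity. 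This contradicts $\int_0^\infty\sqrt{S(\gamma(s))}\,\ud s<\infty$.
\end{itemize}
No $\log s$ asymptotics are needed; this is the paper's argument and it replaces your Step 3 entirely.

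\textbf{Secondary issue in Step 1.} The $g_\delta$-rays from $p$ are not tied to your finite-$g_S$-length curve. Nothing guarantees $\int\sqrt S\,\ud s<\infty$ along their $\delta\to0$ limit, since they may run off where $S$ is large. The alternative you mention also fails: $g_S$-minimizing segments from $p$ to the $x_j$ need not exist, because $g_S$ is incomplete. The paper instead minimizes $g_S$-length from $p$ to $\partial B_g(p,\rho)$ inside the compact ball $\overline{B}_g(p,\rho)$. These minimizers have $g_S$-length bounded by that of the given escaping curve. They subconverge, in $g$-arclength, to a $g_S$-geodesic ray, minimizing on compact pieces, with $\int_0^\infty\sqrt S\,\ud s<\infty$. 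Lemma \ref{lem:SY} applies to this ray directly with $f=S$ and \eqref{eq:B-nonnegative}, with no $\delta$-error term.
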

\begin{proof}
If $M^4$ is compact, the completeness of $g_S$ holds automatically. 

From now on, we deal with the situation $M^4$ is noncompact.
We argue by contradiction and suppose that $g_S$ is incomplete. Then, its metric completion contains a boundary
point. Equivalently, there is an \(g_S\)-Cauchy sequence with no limit in
\(M^4\). Such a sequence must leave every compact subset of \(M^4\):
on each \(g\)-compact set the positive function \(S\) has positive
upper and lower bounds, so \(g\) and \(g_S\) are uniformly equivalent.
By passing to a subsequence and connecting successive points by curves
of summable \(g_S\)-length, we obtain a curve
\begin{align*}
\gamma_0:[0,1)\to M^4
\end{align*}
which leaves every \(g\)-compact set and has finite total \(g_S\)-length $\mathrm{Length}_{g_S}(\gamma_0).$
Fix a point $$p=\gamma_0(0).$$
For any  \(\rho>0\),  the curve \(\gamma_0\) meets \(\partial B_g(p,\rho)\), and hence
\begin{align*}
d_{g_S}\bigl(p,\partial B_g(p,\rho)\bigr) \le \mathrm{Length}_{g_S}(\gamma_0).
\end{align*}

The closed \(g\)-ball \(\overline B_g(p,\rho)\) is compact. Since \(g_S\)
is a smooth positive metric on it, there is an \(g_S\)-length minimizing
curve from \(p\) to \(\partial B_g(p,\rho)\). Stop it at the first
intersection with the boundary and denote it by \(\gamma_\rho\).
Parametrize \(\gamma_\rho\) by \(g\)-arc length. Its \(g\)-length is at
least \(\rho\), whereas its \(g_S\)-length is bounded above by
\(\mathrm{Length}_{g_S}(\gamma_0)\). Choose a sequence \(\rho_j\to\infty\). The initial \(g\)-unit tangent vectors lie
in the compact unit sphere of \(T_pM^4\). After passing to a subsequence,
they converge. The reparametrized \(g_S\)-geodesic equation is a smooth
ODE on every fixed \(g\)-compact set; hence the curves converge smoothly
on every bounded \(g\)-arc-length interval to a curve
\begin{align}
\gamma:[0,\infty)\to M,
\qquad |\dot\gamma|_g=1.
\end{align}
 For every \(T>0\),
the \(g_S\)-length of \(\gamma|_{[0,T]}\) is at most \(\mathrm{Length}_{g_S}(\gamma_0)\).
Letting \(T\to\infty\), we obtain that  
\begin{equation}\label{eq:finite-gS-ray}
\int_0^\infty\sqrt{S(\gamma(s))}\ud s<\infty.
\end{equation}
Set \begin{align*} 
z(s):=\log S(\gamma(s)).
\end{align*}
Then, \eqref{eq:finite-gS-ray} is equivalent to 
\begin{equation}\label{ezinL^1}
    e^{z/2}\in L^1((0,+\infty)).
\end{equation}

Applying  Lemma \ref{lem:SY} with
\begin{align*}
n=4, \quad f=S, \quad \sigma=\frac12,
\end{align*}
and using \eqref{eq:B-nonnegative},
we obtain, for every
\(\varphi\in C_c^\infty((0,\infty))\),
\begin{equation*}
3\int(\varphi')^2
-\frac12\int z'\varphi\varphi'
-\frac5{16}\int(z')^2\varphi^2
\ge0
\end{equation*}
Using Young's inequality \eqref{Young's ineqaulity},  one has
\begin{equation}
\int(\varphi')^2
\ge
\frac1{16}\int(z')^2\varphi^2.
\end{equation}
Combining with Lemma \ref{lem:positive-solution},  there exists  a positive solution $y$  to the following equation
\begin{equation}\label{eq:y-equation-for-gS}
y>0,\quad y''+\frac{1}{16}\left(z'\right)^2y=0, \quad \mathrm{on}\; (0,+\infty).
\end{equation}
Due to the same reason for \eqref{y'geq0}, one has \(y'\ge0\).
Set
\begin{equation}\label{Pgeq0-Ageq0}
P:=\frac{y'}y\ge0,
\qquad
A:=\frac {P}{e^{z/2}}\ge0.
\end{equation}
Then, a direct computation yields that \eqref{eq:y-equation-for-gS} is equivalent to
\begin{align}
P'+P^2+\frac{1}{16}\left(z'\right)^2=0,
\end{align}
and then
\begin{equation}\label{eq:A-critical}
A'
=
-e^{-z/2}
\left(
P+\frac{1}{4}z'
\right)^2
\le0.
\end{equation}
Combining with \eqref{Pgeq0-Ageq0}, one has $A$ must be nonincreasing and bounded.
 Integrating
\eqref{eq:A-critical} from $1$ to infinity,  one has 
\begin{align}
\int_{1}^{\infty}
e^{-z/2}
\left(
P+\frac{1}{4}z'
\right)^2\ud s\leq A(1)<\infty.
\end{align}
Combining with \eqref{ezinL^1} and H\"older's inequality, there holds 
\begin{equation}\label{eq:P+logv-in-L1}
    P+\frac{1}{4}z'\in L^1([1,\infty)).
\end{equation}
Since  \(A\)  is bounded,   \eqref{ezinL^1}  yields that 
\begin{align*}
\int_{1}^{\infty}P\ud s
=\int_{1}^{\infty}Ae^{z/2}\ud s
<\infty.
\end{align*}
Therefore, combining with \eqref{eq:P+logv-in-L1}, one has  
\begin{align*}
z'\in L^1([1,+\infty)),
\end{align*} 
and then  \(z(s)\) converges to a finite real number 
 which contradicts \eqref{ezinL^1}.
 Therefore,  \(g_S\) must be  complete and we finish our proof.
\end{proof}

In the remainder of this section, we establish a gap phenomenon for the scalar curvature \(R_g\); see Proposition \ref{prop:gap-R}.

For brevity, set $$f=\log S$$ and then  the conformal metric  \eqref{eq:def-of-gS} can be rewritten as 
\begin{align*}
g_S=e^fg.
\end{align*}
Moreover, $g_S$ is complete due to Lemma \ref{lem:g_S-complete}. Consider  the weighted Laplacian operator 
\begin{equation}\label{eq:L-def}
\mathcal{L}\phi
=
\Delta_{g_S}\phi-\langle\nabla^{g_S}f,\nabla^{g_S}\phi\rangle_{g_S}.
\end{equation}
Based on this definition, for every smooth function \(\phi\) on $M^4$, a direct computation yields that 
\begin{equation}\label{eq:L-formula}
\mathcal{L}\phi=S^{-1}\Delta_g\phi.
\end{equation}
For 4-manifolds,  the 
\(m\)-Bakry-\'Emery Ricci tensor is given by 
\begin{align}\label{m-Back-Em}
\Ric_f^m(g_S)
:=
\Ric_{g_S}+\Hess_{g_S}f-\frac{1}{m-4}df\otimes df.
\end{align}
More discussions can be found in \cite{KL}. We choose $m=0$ in \eqref{m-Back-Em} and obtain 
\begin{equation}\label{eq:Ricf0-def}
\Ric_f^0(g_S)
=
\Ric_{g_S}+\Hess_{g_S}f+\frac14df\otimes df
\end{equation}
and then a useful lemma follows.
\begin{lemma}\label{lem:Ricf0}
Suppose that \eqref{eq:Fgeq0-S-positive} holds. Then one has
\begin{align}
\Ric_f^0(g_S)\ge0.
\end{align}
\end{lemma}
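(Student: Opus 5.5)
The plan is a direct computation: express $\Ric_f^0(g_S)$ in terms of $g$-quantities and then invoke \eqref{eq:B-nonnegative}. Write $g_S=e^{2u}g$ with $u=f/2=\tfrac12\log S$, and use the two conformal transformation formulas in dimension $n=4$ (Besse, as in the proof of Lemma \ref{lem:SY}).

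First, the Ricci formula quoted in the proof of Lemma \ref{lem:SY} gives
\begin{align*}
\Ric_{g_S}
&=\Ric_g-2\bigl(\Hess_gu-\ud u\otimes\ud u\bigr)-\bigl(\Delta_gu+2|\nabla u|_g^2\bigr)g\\
&=\Ric_g-\Hess_gf+\tfrac12\,\ud f\otimes\ud f-\tfrac12\bigl(\Delta_gf+|\nabla f|_g^2\bigr)g .
\end{align*}
Second, the Hessian of a function under the conformal change follows from \eqref{conformal connection}:
\begin{align*}
\Hess_{g_S}f=\Hess_gf-\ud u\otimes\ud f-\ud f\otimes\ud u+g(\nabla u,\nabla f)\,g
=\Hess_gf-\ud f\otimes\ud f+\tfrac12|\nabla f|_g^2\,g .
\end{align*}

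Adding these two expressions and the term $\tfrac14\ud f\otimes\ud f$ from \eqref{eq:Ricf0-def}, the Hessian terms cancel. The $\ud f\otimes\ud f$ coefficients sum to $\tfrac12-1+\tfrac14+\tfrac14=0$, and the $|\nabla f|_g^2 g$ terms cancel as well. Hence
\begin{align*}
\Ric_f^0(g_S)=\Ric_g-\tfrac12(\Delta_g f)\,g .
\end{align*}
With $f=\log S$ we have $\Delta_gf=\frac{\Delta_gS}{S}-\frac{|\nabla S|_g^2}{S^2}$, so
\begin{align*}
\Ric_f^0(g_S)=\Ric_g-\frac{\Delta_gS}{2S}g+\frac{|\nabla S|_g^2}{2S^2}g
=\Ric_g^{S,1/2}+\frac{|\nabla S|_g^2}{2S^2}g .
\end{align*}

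By \eqref{eq:B-nonnegative}, which uses $F\ge0$ and $S>0$ from \eqref{eq:Fgeq0-S-positive}, we have $\Ric_g^{S,1/2}\ge 3\frac{F}{S}g\ge0$. The last term is also nonnegative, so $\Ric_f^0(g_S)\ge0$ as a quadratic form. This property does not depend on whether we measure against $g$ or $g_S$, since they differ by a positive factor.

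The only real obstacle is bookkeeping in the conformal Hessian and Ricci formulas; in particular, one must check that the choice $m=0$ (coefficient $+\tfrac14$) is exactly what kills the $\ud f\otimes\ud f$ term. I would verify the coefficients once more by testing against $g_S$ for constant $f$ and for a simple radial example.
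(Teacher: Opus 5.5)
Your approach is the paper's: rewrite $\Ric_{g_S}$ and $\Hess_{g_S}f$ in terms of $g$, combine them, and invoke \eqref{eq:B-nonnegative}. Both of your conformal transformation formulas are correct. The error is in the bookkeeping of the $\ud f\otimes\ud f$ terms. There are only three contributions: $+\tfrac12$ from $\Ric_{g_S}$, $-1$ from $\Hess_{g_S}f$, and $+\tfrac14$ from the definition \eqref{eq:Ricf0-def}. The coefficient is therefore $-\tfrac14$, not $0$; you counted the $\tfrac14$ twice. The correct identity is
\[
\Ric_f^0(g_S)=\Ric_g-\tfrac12(\Delta_g f)\,g-\tfrac14\,\ud f\otimes\ud f
=\Ric_g^{S,1/2}+\tfrac12|\nabla f|_g^2\,g-\tfrac14\,\ud f\otimes\ud f .
\]
So your displayed formulas $\Ric_f^0(g_S)=\Ric_g-\frac12(\Delta_g f)g$ and $\Ric_f^0(g_S)=\Ric_g^{S,1/2}+\frac{|\nabla S|_g^2}{2S^2}g$ are false wherever $\nabla S\neq 0$. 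Your remark that the choice $m=0$ is what kills the $\ud f\otimes\ud f$ term is also wrong. Cancellation would need the coefficient $\tfrac12$ in the definition, i.e.\ $m=2$.

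The conclusion survives with one extra line. By Cauchy--Schwarz, $\ud f\otimes\ud f\le |\nabla f|_g^2\,g$ as quadratic forms, so
\[
\tfrac12|\nabla f|_g^2\,g-\tfrac14\,\ud f\otimes\ud f\ \ge\ \tfrac14|\nabla f|_g^2\,g\ \ge 0 .
\]
Combined with $\Ric_g^{S,1/2}\ge 3\frac{F}{S}g\ge 0$ from \eqref{eq:B-nonnegative}, this gives $\Ric_f^0(g_S)\ge 0$. That is exactly how the paper concludes. Note also that your proposed sanity check with constant $f$ could not have caught the slip, since every $\ud f$ term vanishes there.
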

\begin{proof}
The conformal transformation formula for $g_S$ gives
\begin{align}
\Ric_{g_S}
=
\Ric_g-\Hess_gf+\frac12df\otimes df
-\frac12\left(\Delta_gf+|\nabla^gf|_g^2\right)g.
\end{align}
Meanwhile, one has 
\begin{align}
\Hess_{g_S}f
=
\Hess_gf-df\otimes df
+\frac12|\nabla^gf|_g^2g.
\end{align}
Combining with these two equations, we obtain
\begin{align}\begin{split}
\Ric_f^0(g_S)
&=
\Ric_g-\frac12(\Delta_gf)g-\frac14df\otimes df\\
&=
\left(
\Ric_g-\frac{\Delta_gS}{2S}g
\right)
+\left(\frac12|\nabla^gf|_g^2g
-\frac14df\otimes df\right).\end{split}
\end{align}
The first term in the above equation is nonnegative by \eqref{eq:B-nonnegative} while the second term is nonnegative obviously.
Hence, we finish our proof.
\end{proof}

Building on Lemma \ref{lem:g_S-complete} and Lemma \ref{lem:Ricf0}, we obtain
the  Laplacian  comparison theorem by using Theorem 2.2 of \cite{KL}. The following lemma is just a special case of Kuwae-Li's result \cite{KL}.
For the reader's convenience, we sketch their proof here.

Before stating  the lemma, we introduce some notations. For some fixed point $p$,   and  $\operatorname{Cut}_{g_S}(p)$ denotes the cut locus for the point $p$  with respect to the metric $g_S$. 
For brevity, set 
\begin{align}\label{equ:def-of-r_p}
r_p(x):=d_{g_S}(p,x),
\end{align}

\begin{lemma}\label{lem:Laplacian-comparison}
For fixed \(p\in M^4\) and take $x\notin\{p\}\cup\operatorname{Cut}_{g_S}(p)$. 
Let
\begin{align*}
\gamma:[0,r_p(x)]\to M^4
\end{align*} be the unique \(g_S\)-unit-speed minimizing
geodesic from \(p\) to \(x\), and define
\begin{equation}\label{eq:s-p-def}
s_p^{\gamma}(x):=\int_0^{r_p(x)}e^{-f(\gamma(t))/2}\ud t.
\end{equation}
Then, one has 
\begin{equation}\label{eq:Lr}
\mathcal{L}(r_p(x))
\le
\frac{4e^{-f(x)/2}}{s_p(x)}.
\end{equation}
\end{lemma}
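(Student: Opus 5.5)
Since $x$ lies off the cut locus, $r_p$ is smooth near $x$. The plan is the standard Riccati-type argument for the weighted Laplacian with $m=0$ Bakry--\'Emery curvature, following Kuwae--Li.

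\textbf{Setup.} Along the $g_S$-unit-speed minimizing geodesic $\gamma$ from $p$, set
\[
m(t):=\Delta_{g_S}r_p(\gamma(t)), \qquad \psi(t):=f(\gamma(t)),
\]
so that $\mathcal{L}r_p(\gamma(t))=m(t)-\psi'(t)$, because $\langle\nabla^{g_S}f,\nabla^{g_S}r_p\rangle_{g_S}=\psi'$.

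\textbf{Bochner identity.} Apply Bochner's formula to $r_p$ (with $|\nabla r_p|_{g_S}=1$) and use $|\Hess_{g_S}r_p|^2\ge m^2/3$ in dimension $4$:
\[
m'+\frac{m^2}{3}+\Ric_{g_S}(\dot\gamma,\dot\gamma)\le 0 .
\]
Now put $\mathcal{M}:=m-\psi'$. Lemma \ref{lem:Ricf0} gives
\[
\Ric_{g_S}(\dot\gamma,\dot\gamma)\ge -\psi''-\tfrac14(\psi')^2 ,
\]
and substituting $m=\mathcal{M}+\psi'$ yields
\[
\mathcal{M}'+\frac{(\mathcal{M}+\psi')^2}{3}-\frac{(\psi')^2}{4}\le 0 .
\]

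\textbf{Reduction to a Riccati inequality in $\mathcal{M}$.} The goal is an inequality of the form $\mathcal{M}'+\psi'\mathcal{M}/2+\mathcal{M}^2/4\le0$. The expansion is
\[
\frac{(\mathcal{M}+\psi')^2}{3}=\frac{\mathcal{M}^2}{4}+\frac{\mathcal{M}\psi'}{2}+\frac{(\psi')^2}{4}+\frac{1}{12}\bigl(\mathcal{M}^2-2\mathcal{M}\psi'+(\psi')^2\bigr),
\]
and the last bracket equals $\frac{1}{12}(\mathcal{M}-\psi')^2\ge0$. Hence
\[
\mathcal{M}'+\frac{\psi'}{2}\mathcal{M}+\frac{\mathcal{M}^2}{4}\le 0 .
\]

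\textbf{Linearizing via the weight.} Set $u(t):=e^{\psi(t)/2}\mathcal{M}(t)$. The previous inequality becomes
\[
u'\le -\frac{e^{-\psi/2}}{4}u^2 .
\]
Near $t=0$ we have $m\sim 3/t$ and $\psi'$ is bounded, so $u\to+\infty$ as $t\to0^+$.

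\textbf{Integration.} Wherever $u>0$, we get $(1/u)'\ge \frac14 e^{-\psi/2}$. Integrating from $0^+$, where $1/u\to0$, to $r_p(x)$ gives
\[
\frac{1}{u(r_p(x))}\ge \frac{s_p(x)}{4}, \qquad\text{so}\qquad e^{f(x)/2}\mathcal{L}r_p(x)\le \frac{4}{s_p(x)} .
\]
This is exactly \eqref{eq:Lr}.

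\textbf{Main obstacle: sign of $u$.} If $u$ becomes nonpositive at some point, then $u'\le0$ keeps it nonpositive from then on. In that case $\mathcal{L}r_p\le0$ holds trivially and the bound is immediate. The genuinely delicate steps are two. First, one must verify the algebraic completion of the square above, which is where the choice $m=0$ (the coefficient $\frac14 df\otimes df$) is exactly tuned to dimension $4$. Second, one must confirm the asymptotics $u\sim 3/t$ as $t\to0^+$, so that $1/u\to0$ there.
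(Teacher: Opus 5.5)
Your argument is correct and is essentially the paper's proof. Both run the Riccati/Bochner identity along $\gamma$, combine $\Ric_f^0(g_S)\ge 0$ with $|\Hess_{g_S}r_p|^2\ge(\Delta_{g_S}r_p)^2/3$, complete the square to get $\mathcal{M}'+\frac{\psi'}{2}\mathcal{M}+\frac14\mathcal{M}^2\le 0$, and integrate the Riccati inequality for $e^{\psi/2}\mathcal{M}$ using the $3/t$ asymptotics; this is the paper's $\Lambda$, which it writes in the variable $s$ rather than $t$.

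One small slip: in your completing-the-square identity the bracket should be $\frac{1}{12}(\mathcal{M}+\psi')^2$, not $\frac{1}{12}(\mathcal{M}-\psi')^2$. With the minus sign the cross terms total $\frac13\mathcal{M}\psi'$ instead of the required $\frac23\mathcal{M}\psi'$. The corrected bracket is still a nonnegative square, so the resulting inequality and everything after it stand.
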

\begin{proof}
For brevity, set the notations as follows 
\begin{align*}
m_f(t):=L(r_p(\gamma(t))),
\qquad
q(t):=\frac{d}{dt}(f\circ\gamma)(t).
\end{align*}
Then, using \eqref{eq:L-def}, one has
\begin{align*}
m_f=\Delta_{g_S}r_p-q.
\end{align*}
The radial Riccati identity gives
\begin{align*}
\frac{d}{dt}\Delta_{g_S}r_p
=
-|\Hess_{g_S}r_p|_{g_S}^2
-\Ric_{g_S}(\dot\gamma,\dot\gamma).
\end{align*}
Combining these two identities, we obtain
\begin{align}
m_f'
&=
-|\Hess_{g_S}r_p|_{g_S}^2
-\Ric_{g_S}(\dot\gamma,\dot\gamma)
-f''.
\label{eq:mf-exact}
\end{align}
The definition \eqref{eq:Ricf0-def} and   Lemma \ref{lem:Ricf0} imply that 
\begin{align}
-\Ric_{g_S}(\dot\gamma,\dot\gamma)-f''
\le
\frac14q^2.
\end{align}
Moreover, the radial eigenvalue of \(\Hess_{g_S}r_p\) is zero, so one has 
\begin{align}
|\Hess_{g_S}r_p|_{g_S}^2
\ge
\frac{(\Delta_{g_S}r_p)^2}{3}
=
\frac{(m_f+q)^2}{3}.
\end{align}
Inserting   these two estimates  into \eqref{eq:mf-exact} yields
\begin{equation}\label{eq:mf-ineq}
m_f'
\le
-\frac{(m_f+q)^2}{3}
+\frac14q^2.
\end{equation}

For brevity, set another two  notations
\begin{align}
\Lambda=e^{f/2}m_f,
\qquad
s(t)=\int_0^te^{-f(\gamma(\tau))/2}\,d\tau.
\end{align}
Immediately, one has 
 \begin{align}
 \frac{\ud s}{\ud t}=e^{-f/2},
 \end{align} 
 and then
\begin{align}\label{Lmabda'}
\frac{d\Lambda}{ds}
=
e^f\left(m_f'+\frac12qm_f\right).
\end{align}
Using \eqref{eq:mf-ineq}, there holds
\begin{align}\begin{split}
m_f'+\frac12qm_f
&\le
-\frac13m_f^2-\frac16qm_f-\frac1{12}q^2\\
&=
-\frac14m_f^2-\frac1{12}(m_f+q)^2\\
&\le
-\frac14m_f^2.\end{split}
\end{align}
Consequently, combining with \eqref{Lmabda'},  we obtain
\begin{equation}\label{eq:Lambda-ineq}
\frac{d\Lambda}{ds}
\le
-\frac14\Lambda^2.
\end{equation}

The standard distance asymptotics at the  point $p$  give
\begin{align}
m_f(t)=\frac3t+O(1),
\qquad
s(t)=e^{-f(p)/2}t+O(t^2),
\end{align}
and hence
\begin{equation}\label{eq:Lambda-asymptotic}
\Lambda(s)=\frac3s+O(1)
\qquad\text{as }s\rightarrow 0^+.
\end{equation}
Fix an endpoint value of \(s\). If \(\Lambda\) becomes nonpositive
before that endpoint, the desired upper bound is immediate. Otherwise
\(\Lambda>0\) on the relevant interval, and
\eqref{eq:Lambda-ineq} yields that 
\begin{align}\label{Lambda^-1'}
    (\Lambda^{-1})'\geq 1/4.
\end{align}
Integrating  \eqref{Lambda^-1'} from \(s_0>0\) to $s$ and then using
\eqref{eq:Lambda-asymptotic},  as \(s_0\rightarrow 0^+\), we obtain
\begin{align}
   \Lambda(s)^{-1}\ge s/4 
\end{align}
which  proves
\eqref{eq:Lr}. Thus, we finish the proof.
\end{proof}

The classical Calabi's preceding estimate  \cite{Calabi} extends to the cut locus in the upper-barrier
sense. For the reader's convenience, we record the precise form needed below.

\begin{lemma}\label{lem:Calabi-support}
Let \(x\ne p\) and 
\begin{align*}
\gamma:[0,\ell]\to M^4, \quad \ell=r_p(x),
\end{align*} be any
\(g_S\)-unit-speed minimizing geodesic from \(p\) to \(x\), and consider  $s_p^{\gamma}(x)$ same as \eqref{eq:s-p-def}.
For every \(\varepsilon\in(0,\ell)\), set 
\begin{align}\label{r_p_epsilon}
p_\varepsilon=\gamma(\varepsilon),
\qquad
r_{p,\varepsilon}(y)
=
\varepsilon+d_{g_S}(p_\varepsilon,y).
\end{align}
Then, one has 
\begin{align*}
x\notin\operatorname{Cut}_{g_S}(p_\varepsilon).
\end{align*} In particular,
\(r_{p,\varepsilon}\) is smooth in a neighborhood of \(x\), and  one has 
\begin{equation*}
\mathcal{L}r_{p,\varepsilon}(x)
\le
\frac{4e^{-f(x)/2}}
{s_p^{\gamma}(x)-s_p^{\gamma}(p_\varepsilon)}.
\end{equation*}
\end{lemma}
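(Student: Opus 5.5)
The plan is the usual Calabi trick: shifting the base point slightly along the geodesic puts $x$ outside the cut locus, and the distance from the shifted point is then an upper barrier for $r_p$.

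\emph{Step 1: $x\notin\operatorname{Cut}_{g_S}(p_\varepsilon)$.} The restriction $\gamma|_{[\varepsilon,\ell]}$ is a $g_S$-minimizing geodesic from $p_\varepsilon$ to $x$. Indeed, if a shorter curve from $p_\varepsilon$ to $x$ existed, concatenating it with $\gamma|_{[0,\varepsilon]}$ would give a curve from $p$ to $x$ of length $<\ell$, a contradiction. Suppose $x$ were a cut point of $p_\varepsilon$. Then either $x$ is conjugate to $p_\varepsilon$ along $\gamma$, or there is a second minimizing geodesic from $p_\varepsilon$ to $x$.
\begin{itemize}
\item In the second case, concatenating that geodesic with $\gamma|_{[0,\varepsilon]}$ gives a broken curve from $p$ to $x$ of length $\ell$. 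It is minimizing but not smooth at $p_\varepsilon$, which is impossible.
\item In the first case, $x$ conjugate to $p_\varepsilon$ forces $\gamma|_{[0,\ell]}$ to fail to minimize past its first conjugate point. Equivalently, by the standard index form argument, there is a variation of $\gamma$ with fixed endpoints $p,x$ that strictly decreases length, contradicting minimality.
\end{itemize}
Hence $d_{g_S}(p_\varepsilon,\cdot)$ is smooth near $x$, and so is $r_{p,\varepsilon}$.

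\emph{Step 2: apply Lemma \ref{lem:Laplacian-comparison} at $p_\varepsilon$.} By completeness of $g_S$ (Lemma \ref{lem:g_S-complete}) and $\Ric^0_f(g_S)\ge0$ (Lemma \ref{lem:Ricf0}), Lemma \ref{lem:Laplacian-comparison} applies with base point $p_\varepsilon$ and the unique minimizing geodesic $\gamma|_{[\varepsilon,\ell]}$ (reparametrized from $0$). Since $\mathcal L$ annihilates constants, $\mathcal L r_{p,\varepsilon}=\mathcal L\, d_{g_S}(p_\varepsilon,\cdot)$. The lemma bounds this at $x$ by $4e^{-f(x)/2}/\tilde s$, where
\[
\tilde s=\int_\varepsilon^{\ell}e^{-f(\gamma(t))/2}\ud t=s_p^\gamma(x)-s_p^\gamma(p_\varepsilon),
\]
with $s_p^\gamma(p_\varepsilon)=\int_0^\varepsilon e^{-f(\gamma(t))/2}\ud t$. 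This is exactly the claimed inequality.

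\emph{Main obstacle.} The delicate point is Step 1, namely excluding conjugacy at $x$ for $p_\varepsilon$. I would argue via the index form. If $J$ is a nontrivial Jacobi field along $\gamma|_{[\varepsilon,\ell]}$ vanishing at both ends, extend it by zero on $[0,\varepsilon]$. The result $V$ is a piecewise smooth field along $\gamma$ with index $I(V,V)=0$. Because $J'(\varepsilon)\neq0$, $V$ is not smooth, so it is not a Jacobi field, and it can be perturbed to give $I<0$ with endpoints fixed. This contradicts the minimality of $\gamma$ on $[0,\ell]$. The remaining steps are bookkeeping.
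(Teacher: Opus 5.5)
Your proposal is correct and follows essentially the same route as the paper. You show the terminal segment is the unique minimizing geodesic from $p_\varepsilon$ to $x$, using that a minimizing broken curve must be smooth. You rule out conjugacy; the paper simply cites Jacobi's theorem, while you reprove it with the index-form argument on the zero-extended Jacobi field. You then apply Lemma~\ref{lem:Laplacian-comparison} at base point $p_\varepsilon$, using $\int_\varepsilon^\ell e^{-f(\gamma(t))/2}\,\ud t = s_p^\gamma(x)-s_p^\gamma(p_\varepsilon)$.
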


\begin{proof}
For every \(y\in M^4\), one has 
\begin{align*}
r_p(y)
=d_{g_S}(p,y)
\le
 d_{g_S}(p,p_\varepsilon)+d_{g_S}(p_\varepsilon,y)
=
r_{p,\varepsilon}(y).
\end{align*}
Since the terminal segment \(\gamma|_{[\varepsilon,\ell]}\) has length
\(\ell-\varepsilon\), equality holds at \(x\).

We claim  that 
\begin{align}\label{x-not-in-cut-locus}
x\notin\operatorname{Cut}_{g_S}(p_\varepsilon).
\end{align}

First,
the terminal segment is the unique minimizing geodesic from
\(p_\varepsilon\) to \(x\). Indeed, if \(\sigma\) were another
minimizing geodesic, then the concatenation of
\(\gamma|_{[0,\varepsilon]}\) with \(\sigma\) would have total length
$\ell=d_{g_S}(p,x)$ and hence would itself be a minimizing curve from
\(p\) to \(x\). Every length-minimizing curve in a smooth Riemannian
manifold is a smooth geodesic, so the two pieces must have the same
tangent at \(p_\varepsilon\). Uniqueness for the geodesic equation then
forces $\sigma=\gamma|_{[\varepsilon,\ell]}.$ It remains to rule out conjugacy. Suppose that \(x\) were conjugate to
\(p_\varepsilon\) along the terminal segment  which shows that there exists a conjugate point $p_\varepsilon$ between  the points $p$ and $x$ along the local minimizing curve $\gamma_{[0,\ell]}$ which contradicts to Jacobi's theorem (see, for example \cite[Theorem 6.3.1]{Jost}).

 Thus, we prove the claim \eqref{x-not-in-cut-locus}. 
Then,  \(d_{g_S}(p_\varepsilon,\cdot)\) is smooth in a neighborhood of \(x\).
By applying  Lemma~\ref{lem:Laplacian-comparison} with base point
\(p_\varepsilon\) and  using the fact 
\begin{align*}
\int_\varepsilon^\ell e^{-f(\gamma(t))/2}\,dt
=
s_p^{\gamma}(x)-s_p^{\gamma}(p_\varepsilon), 
\end{align*}
we finish the proof.
\end{proof}

 If
\(\gamma\) is any \(g_S\)-unit-speed curve, then
\begin{align}
   |\dot\gamma|_g=e^{-f/2}. 
\end{align}
Thus the integral of \(e^{-f/2}\) with respect to \(g_S\)-arc length is
exactly the \(g\)-length. In particular, for every minimizing
\(g_S\)-geodesic \(\gamma\) from \(p\) to \(x\), one has 
\begin{equation}\label{eq:s-ge-dg}
s_p^{\gamma}(x) \ge
d_g(p,x).
\end{equation}
Then,  for every minimizing \(g_S\)-geodesic \(\gamma\) from \(p\) to \(x\), if \begin{align}r_p(x)\geq 1,\end{align} we have 
\begin{equation}\label{eq:delta-p}
s_p^{\gamma}(x)\geq s_p^{\gamma}(\gamma(1))\ge d_g(p,\partial B_{g_S}(p,1))=:\delta_p>0
\end{equation}

\begin{lemma}\label{lem:q-inequality}
Let \begin{align}\label{eq:q-and-v-def}
q:=S^{-1/2},\quad v:=\frac{q}{1+q}.
\end{align}
Then, one has 
\begin{equation}\label{eq:v-inequality}
\mathcal{L}v\ge\frac{3kq^3-\frac18q}{(1+q)^2}.
\end{equation}
\end{lemma}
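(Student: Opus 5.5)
The plan is a direct computation: express $\mathcal{L}v$ through $\Delta_g$ using \eqref{eq:L-formula}, apply the chain rule twice (from $S$ to $q$, then from $q$ to $v$), and estimate $\Delta_g S$ from above using \eqref{eq:S-equation}. Throughout, \eqref{eq:Fgeq0-S-positive} is in force, so $S>0$ and $q$, $v$ are smooth.

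\emph{Step 1: the $q$-level inequality.} Since $q=S^{-1/2}$, we have
\begin{align*}
\nabla q=-\tfrac12 S^{-3/2}\nabla S,\qquad
\Delta_g q=-\tfrac12 S^{-3/2}\Delta_g S+\tfrac34 S^{-5/2}|\nabla S|_g^2 .
\end{align*}
By \eqref{eq:S-equation} together with $|E|_g^2\ge0$ and $F\ge0$,
\begin{align*}
\Delta_g S\le -6kS+\tfrac14 S^2,
\qquad\text{hence}\qquad
-\tfrac12 S^{-3/2}\Delta_g S\ge 3kS^{-1/2}-\tfrac18 S^{1/2}.
\end{align*}
Multiplying by $S^{-1}$ and using $S^{-1/2}=q$, this gives
\begin{align*}
S^{-1}\Delta_g q\ge 3kq^3-\tfrac18 q+\tfrac34 S^{-7/2}|\nabla S|_g^2 .
\end{align*}

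\emph{Step 2: compose with $v=\frac{q}{1+q}$.} Here
\begin{align*}
\frac{\ud v}{\ud q}=\frac{1}{(1+q)^2},\qquad \frac{\ud^2 v}{\ud q^2}=-\frac{2}{(1+q)^3},
\end{align*}
so that
\begin{align*}
\mathcal{L}v=S^{-1}\Delta_g v=\frac{S^{-1}\Delta_g q}{(1+q)^2}-\frac{2S^{-1}|\nabla q|_g^2}{(1+q)^3},
\qquad S^{-1}|\nabla q|_g^2=\tfrac14 S^{-4}|\nabla S|_g^2 .
\end{align*}
The zeroth-order part of the Step 1 bound, divided by $(1+q)^2$, is exactly the right-hand side of \eqref{eq:v-inequality}.

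\emph{Step 3: the gradient terms.} This is the only point needing care. Writing $S^{-7/2}=S^{-4}q^{-1}$, the gradient contributions combine to
\begin{align*}
\frac{S^{-4}|\nabla S|_g^2}{(1+q)^3}\left(\frac{3(1+q)}{4q}-\frac12\right),
\end{align*}
and the bracket is positive because $\frac{3(1+q)}{4q}>\frac34$. Dropping this nonnegative term yields \eqref{eq:v-inequality}.

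I do not expect a genuine obstacle; the main risk is bookkeeping of the powers of $S$ and $q$. The point to verify is that the good gradient term coming from $\Delta_g q$ dominates the bad term coming from the concavity of $q\mapsto v$.
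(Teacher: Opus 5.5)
Your proposal is correct and follows essentially the same approach as the paper. Both arguments write $\mathcal{L}=S^{-1}\Delta_g$, drop the $|E|_g^2$ and $F$ terms in \eqref{eq:S-equation}, and apply the chain rule to $v=q/(1+q)$; the leftover gradient term has the nonnegative coefficient $\frac{3+q}{4q(1+q)^3}S^{-4}|\nabla S|_g^2$ (your bracket matches the paper's $\frac{q+3}{q(1+q)^3}|\nabla^{g_S}q|_{g_S}^2$), and the only cosmetic difference is that the paper first computes $\mathcal{L}(S^{-1})$ and then passes to $q$ via $S^{-1}=q^2$, whereas you differentiate $S^{-1/2}$ directly.
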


\begin{proof}
Using \eqref{eq:L-formula}, one has 
\begin{align}
\mathcal{L}S^{-1}=S^{-1}\Delta_g(S^{-1})
=
-S^{-3}\Delta_gS+2S^{-4}|\nabla^g S|_g^2.
\end{align}
Combining with  \eqref{eq:SFE} and \eqref{eq:S-equation}, there holds
\begin{align}\begin{split}
\mathcal{L}S^{-1}
&=
6kS^{-2}-\frac14S^{-1}
+3|E|_g^2S^{-3}+6FS^{-3}
+2S^{-4}|\nabla^gS|_g^2\\
&\ge
6kS^{-2}-\frac14 S^{-1}+ 2S|\nabla^{g_S}S^{-1}|_{g_S}^2.\end{split}
\end{align}
Inserting $S^{-1}=q^2$, we obtain
\begin{align}
2q\mathcal{L}(q)+2|\nabla^{g_S}q|_{g_S}^2
\ge
6kq^4-\frac14q^2
+8|\nabla^{g_S}q|_{g_S}^2.
\end{align}
which yields that 
\begin{align}
\mathcal{L}q
\ge
3kq^3-\frac18q
+\frac3q|\nabla^{g_S}q|_{g_S}^2.
\end{align}
Then, a direct computation yields that 
\begin{align}
\begin{split}
\mathcal{L}\left(v\right)&=\frac{\mathcal{L}q}{(1+q)^2}
-\frac{2|\nabla^{g_S}q|_{g_S}^2}{(1+q)^3}\\
&\ge\frac{3kq^3-\frac18q}{(1+q)^2}
+
\frac{q+3}{q(1+q)^3}
|\nabla^{g_S}q|_{g_S}^2.\end{split}
\end{align}
Thus, we finish our proof.
\end{proof}

Building the above preparations, we are now ready to prove  the following proposition by employing the famous  Omori-Yau type argument.

\begin{prop}\label{prop:gap-R}
  Suppose that \eqref{eq:Fgeq0-S-positive} holds and the constant $k>0$. Then, one has 
  \begin{equation}\label{Rggeq12k}
      R_g\geq 12k.
  \end{equation}
\end{prop}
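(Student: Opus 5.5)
The plan is an Omori--Yau type maximum principle for the operator $\mathcal{L}$ on the complete manifold $(M^4,g_S)$, applied to the bounded function $v=q/(1+q)\in(0,1)$ with $q=S^{-1/2}$.

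By Lemma~\ref{lem:q-inequality},
\[
\mathcal{L}v\ge \frac{q(3kq^2-\tfrac18)}{(1+q)^2}.
\]
Showing $R_g\ge 12k$ is the same as showing $S\ge 24k$, i.e.\ $q^2\le 1/(24k)$. Suppose instead that $\sup q^2>1/(24k)$. Then $v^*:=\sup_M v$ satisfies $v^*>v_0:=q_0/(1+q_0)$, where $q_0=(24k)^{-1/2}$ is the positive zero of $3kq^2-\frac18$. On the set $\{v>(v_0+v^*)/2\}$ we then have the strictly positive lower bound $\mathcal{L}v\ge c_0>0$. This holds also when $v^*=1$ (that is, $q$ unbounded, or $S\to0$ along a sequence): as $q\to\infty$ the right-hand side behaves like $3kq$, which is still bounded below by a positive constant. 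If $M^4$ is compact, $v$ attains its maximum, where $\Delta_g v\le0$, hence $\mathcal{L}v=S^{-1}\Delta_g v\le0$, a contradiction. So the real work is the noncompact case.

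For the noncompact case, fix $p$ and, for small $\eta>0$, consider
\[
\Phi_\eta=v-\eta\,r_p .
\]
Since $g_S$ is complete (Lemma~\ref{lem:g_S-complete}) and $v$ is bounded, $\Phi_\eta$ tends to $-\infty$ at infinity and attains a maximum at some $x_\eta$. Taking $p$ with $v(p)$ close to $v^*$ and letting $\eta\to0$, we get $v(x_\eta)\to v^*$, and also $\eta\,r_p(x_\eta)\le v(x_\eta)-v(p)\le v^*-v(p)$, which can be made small. If $x_\eta$ lies in the cut locus, replace $r_p$ by the upper barrier $r_{p,\varepsilon}$ of Lemma~\ref{lem:Calabi-support}; then $v-\eta r_{p,\varepsilon}$ still has a local maximum at $x_\eta$. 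At that maximum the second-order condition gives
\[
\mathcal{L}v(x_\eta)\le \eta\,\mathcal{L}r_{p,\varepsilon}(x_\eta)\le \frac{4\eta\, e^{-f(x_\eta)/2}}{s^\gamma_p(x_\eta)-s^\gamma_p(p_\varepsilon)}.
\]
Here $e^{-f/2}=S^{-1/2}=q$. Once $r_p(x_\eta)\ge1$, the denominator is at least $\delta_p/2$ for small $\varepsilon$, by \eqref{eq:delta-p}. If instead $r_p(x_\eta)<1$ along a subsequence, the points stay in a compact set, where $q$ is bounded, and we can pass to a limit and argue at a genuine maximum. Combining the two bounds gives
\[
\frac{q(3kq^2-\tfrac18)}{(1+q)^2}\le \frac{8\eta q}{\delta_p}\quad\text{at }x_\eta.
\]
Dividing by $q>0$ yields $(3kq^2-\tfrac18)/(1+q)^2\le 8\eta/\delta_p$. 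For $v(x_\eta)$ near $v^*>v_0$, the left side is bounded below by a positive constant; this is true even as $q\to\infty$, where it tends to $3k$. Letting $\eta\to0$ gives the contradiction.

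The main obstacle is that the constant $\delta_p$ depends on $p$, and the choice of $p$ must be fixed before sending $\eta\to0$. The order of choices is therefore: fix $p$ with $v(p)>(v_0+v^*)/2$ (any point works), then send $\eta\to0$. A second point to check is that the limits $v(x_\eta)\to v^*$ and ``eventually in the good set'' really hold. The standard trick is to use $v(x_\eta)\ge \Phi_\eta(x_\eta)\ge \sup_M \Phi_\eta\to v^*$. Finally, the extra factor $q$ in the Laplacian comparison bound must cancel against the factor $q$ in the lower bound for $\mathcal{L}v$, which it does exactly.
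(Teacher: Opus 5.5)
Your proposal is correct and follows essentially the same Omori--Yau argument as the paper: you penalize $v=q/(1+q)$ by a small multiple of $r_p$, use the Calabi barrier $r_{p,\varepsilon}$ together with the weighted Laplacian comparison and the lower bound $\delta_p$, and cancel the factor $q=e^{-f/2}$ on both sides. The differences are cosmetic. You argue by contradiction and use $\sup_M\Phi_\eta\to v^*$ in place of the paper's specific choice $\delta_j=1/(j(1+r_p(y_j)))$. You also dispose of the case $r_p(x_\eta)<1$ by compactness, whereas the paper first splits into attained and non-attained supremum.
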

\begin{proof}
If $M^4$ is compact, Corollary \ref{cor:R_g-lower-bound} shows that \eqref{Rggeq12k} holds.  Thus, we only need to deal with  the noncompact case.
Fix  a point \(p\in M\), recall the notations \eqref{equ:def-of-r_p}
and \eqref{eq:q-and-v-def}
\begin{align*}
r_p(x)=d_{g_S}(p,x), \quad q=S^{-1/2},
\qquad
v=\frac{q}{1+q}.
\end{align*}
Obviously, one has 
\begin{align}\label{v-bounded}
0<\sup_{M^4}v \leq 1.
\end{align}

If  \(\sup_{M^4} v\) is attained at some point
\(x_0\).  Thus, one has 
\begin{align*}
\mathcal{L}v(x_0)= S^{-1}\Delta_gv(x_0)\leq 0.
\end{align*}
Combining with \eqref{eq:v-inequality}, we obtain
\begin{align*}
0
\ge
\frac{q(x_0)
\left(3kq(x_0)^2-\frac18\right)}
{(1+q(x_0))^2}
\end{align*}
which yields that 
\begin{align*}
q(x_0)^2\le\frac1{24k}.
\end{align*}
Since \(v\) is a strictly increasing function of \(q\),
the function \(q\) also attains its maximum at \(x_0\) and then $R_g$ attains its minimum at $x_0$.
Thus,  \eqref{Rggeq12k} holds.

Suppose now that \(\sup_Mv\) is not attained. Choose a sequence \(y_j\in M^4\) such that
\begin{align}
v(y_j)>\sup_{M^4} v-\frac1j,
\end{align}
and set
\begin{align}
\delta_j
:=
\frac1{j(1+r_p(y_j))}.
\end{align}
Since \(v(x)\) is bounded \eqref{v-bounded} and \(r_p(x)\) is proper on the complete manifold
\(({M^4},g_S)\)  (Lemma \ref{lem:g_S-complete}), the function
\begin{align}\label{v-deltar_p}
v(x)-\delta_jr_p(x)
\end{align}
attains its maximum at some point \(x_j\). By the definition of
\(x_j\), there holds
\begin{align*}
\begin{split}
v(x_j)-\delta_jr_p(x_j)
&\ge
v(y_j)-\delta_jr_p(y_j)\\
&>
\sup_{M^4} v-\frac2j.\end{split}
\end{align*}
Hence, we have 
\begin{equation*}
v(x_j)>\sup_{M^4} v-\frac2j.
\end{equation*}
In particular, one has 
\begin{align}\label{v-to-sup_Mv}
v(x_j)\longrightarrow \sup_{M^4} v.
\end{align}
Since \(\sup_{M^4} v\) is not attained, the sequence \(x_j\) leaves every compact
subset of \(M^4\). Thus, for all sufficiently large \(j\), one has 
\begin{align*}
r_p(x_j)\ge1.
\end{align*}
Let \(\gamma_j\) be a minimizing \(g_S\)-geodesic from \(p\) to \(x_j\).
Combining with the  estimate \eqref{eq:delta-p}, one has
\begin{align*}
    s_p^{\gamma_j}(x_j)\ge\delta_p.
\end{align*}
If $x_j$ lies in the cut locus of $p$, we use $r_{p,\varepsilon}$ \eqref{r_p_epsilon} to replace $r_p$ and choose $\varepsilon$ in Lemma \ref{lem:Calabi-support}  small enough such that 
\begin{align*}
    s_p^{\gamma_j}(p_\varepsilon)\leq \frac{\delta_p}{2}.
\end{align*}
Thus, using Lemma \ref{lem:Laplacian-comparison} and Lemma \ref{lem:Calabi-support}, one has
\begin{align*}
\mathcal{L}(r_p(x_j))\leq \frac{8}{\delta_p}q(x_j).
\end{align*}
Since  \eqref{v-deltar_p} attains maximum  at \(x_j\),  
 one has
\begin{align*}
\mathcal{L}(v-\delta_jr_p)(x_j)=S^{-1}\Delta_g(v-\delta_j r_p)(x_j)\leq 0
\end{align*}
which yields that 
\begin{equation}\label{eq:penalized-upper}
\mathcal{L}v(x_j)\leq \delta_j\mathcal{L}(r_p(x_j))
\le \frac{8}{\delta_p}\delta_j q(x_j).
\end{equation}
Combining \eqref{eq:penalized-upper} with \eqref{eq:v-inequality}, one has  gives
\begin{align*}
\frac{3kq(x_j)^3-\frac18q(x_j)}{(1+q(x_j))^2}
\le
\frac{8}{\delta_p}\delta_jq(x_j)
\end{align*}
which is equivalent to
\begin{equation}\label{eq:q-penalized}
\frac{3kq(x_j)^2-\frac18}{(1+q(x_j))^2}
\le \frac{8}{\delta_p}\delta_j=\frac{8}{\delta_p}\cdot\frac1{j(1+r_p(y_j))}
\end{equation}
Using \eqref{v-to-sup_Mv}, one has  
\begin{align}\label{qx_j-tosup}
q(x_j)\longrightarrow \sup_{M^4}q
\end{align}
with the usual convention that \(q_j\to\infty\) when \(\sup_{M^4}q=\infty\).

If \(\sup_{M^4}q=\infty\), then the left-hand side of
\eqref{eq:q-penalized} tends to \(3k\), whereas the right-hand side tends to zero which is  impossible. Hence, \(\sup_{M^4} q<\infty\).

Letting \(j\to\infty\) in \eqref{eq:q-penalized} and using \eqref{qx_j-tosup}, we obtain
\begin{align*}
\frac{3k(\sup_{M^4} q)^2-\frac18}{(1+\sup_{M^4} q)^2}
\le0.
\end{align*}
Therefore, one has 
\begin{align*}
(\sup_{M^4} q)^2\le\frac1{24k}.
\end{align*}
which gives \eqref{Rggeq12k}.
 Thus, we finish the proof.
\end{proof}

\section{Proof of Theorems \ref{thm:Bonnet myers} and \ref{thm:diam}}\label{sec:diameter-control}

\begin{proof}[\bf Proof of Theorem \ref{thm:Bonnet myers}:]
Using  Lemma \ref{lem:GM-lemma},  we have either 
\begin{align}
\Ric_g\equiv -3kg \quad \mathrm{or} \quad R_g>-12k.
\end{align} 
  If $R_g>-12k$, then we obtain $R_g\ge 12k$ by Proposition \ref{prop:gap-R}. 
  Further, Theorem \ref{thm:Bonnet myers-2} implies that $M^4$ is compact. Thus, we finish the proof.
\end{proof}

\begin{proof}[\bf Proof of Theorem \ref{thm:diam}:]
From the assumption \eqref{condition-of-diam}, one has
$$R_g\geq 0, \qquad Q_g\geq 0.$$
Choosing $k=0$ in  Lemma \ref{lem:GM-lemma},  we have either 
\begin{align*}
\Ric_g\equiv 0 \quad  \mathrm{ or }\quad  R_g>0.
\end{align*}    

If $R_g>0$, Lemma \ref{lem:g_S-complete} shows that $R_gg$ is complete.   Apply Lemma \ref{lem:SY} with
	\begin{align*}
	n=4,
	\quad f=R_g,
	\quad \sigma=\frac12,
	\quad \tilde g=R_gg.
	\end{align*}
Consider a curve $\gamma(s)$ on $[0,l]$ chosen as in Lemma \ref{lem:SY}. 
	Set
    \begin{align*}
        z(s):=\log R_g(\gamma(s)).
    \end{align*}
	Then,  for every smooth  function
	$\phi$ vanishing at end points, using \eqref{eq:conf-Ric-R} with $\varepsilon=0$ and the condition \eqref{condition-of-diam},   one has 
	\begin{equation*}
		3\int_0^l(\phi')^2
		-\frac12\int_0^l z'\phi\phi'
		-\frac5{16}\int_0^l(z')^2\phi^2
		\geq 3\theta\int_0^l \phi^2.
	\end{equation*}
	Using Young's inequality \eqref{Young's ineqaulity} in the above inequality, we obtain that
	\begin{equation}\label{phi'geq3theta}
		\frac{16}{5}\int^l_0(\phi')^2\ud s
	\geq 3\theta \int^l_0\phi^2\ud s.
	\end{equation}
	In  \eqref{phi'geq3theta}, choosing 
    \begin{align*}
\phi(s)=\sin\left(\frac{\pi}{l}s\right),
    \end{align*} a direct computation yields that 
    \begin{align*}
        l\leq \frac{4\pi }{\sqrt{15\theta}}.
    \end{align*}
    Thus, $M^4$ is compact.  Following the same argument as in  Corollary \ref{cor:R_g-lower-bound}, one easily obtain $R_g\geq 24\theta.$
	Thus, we finish the proof.
    \end{proof}

	\section{Further discussions}\label{sec:discussions}

\begin{example}\label{example}
Fix $k>0$, for every integer $j\geq1$, let
$$
\mathcal E_j=
\left\{
(x_0,x)\in\mr\times\mr^4:
\frac{k}{j^2}x_0^2+4kj^2|x|^2=1
\right\},
$$
and let $g_j$ be the metric induced from $\mr^5$. After identifying $\mathcal E_j$ with $\ms^4$, we have
\begin{equation*}
  Q_{g_j}\geq6k^2 \quad \text{and} \quad  R_{g_j}\geq12k, 
\end{equation*}
but
\begin{equation*}
   \diam(\ms^4,g_j)\geq d_{g_j}(N_j, S_j)\ge |N_j-S_j|=\frac{2j}{\sqrt{k}}\longrightarrow\infty,
   \end{equation*}
where $N_j$ and $S_j$ are the two poles. We also have
\begin{equation*}
\inf_{\ms^4}\frac{Q_{g_j}}{R_{g_j}}\longrightarrow0.
\end{equation*}
\end{example}

Although the diameter cannot be controlled in this setting, the volume can still be effectively bounded.
Building on Theorem \ref{thm:Bonnet myers}, if we assume the curvature conditions
\begin{equation}\label{Q-R-positive condition}
Q_g \geq 6k^2 \quad \text{and} \quad R_g \geq 12k > 0,
\end{equation}
then Gursky's formula \eqref{Gursky upper bound} immediately yields the sharp volume upper bound
\begin{equation}\label{volume-upper-bound}
\operatorname{Vol}(M^4,g) \leq \frac{|\mathbb{S}^4|}{k^2}.
\end{equation}
The corresponding rigidity result for \eqref{volume-upper-bound} has been addressed in Theorem 1.6 of \cite{LW}.

If \((M^4,g)\) is compact, then under the pointwise positivity assumptions
\begin{equation}\label{Q-and-R-positive-everywhere}
    Q_g > 0, \quad \text{and} \quad R_g > 0, \quad \forall x \in M^4,
\end{equation}
the  result of Chang-Gursky-Yang \cite{CGY} ensures the existence of a conformal metric with positive Ricci curvature. This naturally raises the question of whether \eqref{Q-and-R-positive-everywhere} alone can directly imply that the Ricci curvature of the original metric is positive. Such a question seems rather ambitious. In \cite{LWX}, for conformal metrics \(g = e^{2u}g_0\) on the standard four-sphere \((\mathbb{S}^4, g_0)\), it was shown that if both the \(Q\)-curvature and the scalar curvature are nonnegative everywhere, then the Ricci curvature itself is nonnegative. Moreover, in \cite{LX}, under the same assumptions, they further established that even the sectional curvature is nonnegative.


\begin{thebibliography}{99}

\bibitem{Ambrose}
W. Ambrose,  A theorem of Myers, Duke Math. J.  24 (1957), 345–348.

\bibitem{Besse}
A. Besse, Einstein manifolds, 
Ergeb. Math. Grenzgeb. (3), 10
Springer-Verlag, Berlin, 1987. xii+510 pp.

\bibitem{BHJ}
S. Brendle,  S. Hirsch,  F. Johne, 
A generalization of Geroch's conjecture, 
Comm. Pure Appl. Math. 77 (2024), no. 1, 441–456.

\bibitem{Calabi}
E. Calabi,  On Ricci curvature and geodesics,
Duke Math. J. 34 (1967), 667–676.

\bibitem{CGY}
S.-Y. A.   Chang, M. Gursky,   P. Yang,
An equation of Monge-Ampère type in conformal geometry, and four-manifolds of positive Ricci curvature,
Ann. of Math. (2) 155 (2002), no. 3, 709–787.
\bibitem{CGY JAM}
S.-Y. A.   Chang, M. Gursky,   P. Yang,
An a priori estimate for a fully nonlinear equation on four-manifolds,
J. Anal. Math. 87 (2002), 151–186.


\bibitem{CGY IHES}
S.-Y. A.   Chang, M. Gursky,   P. Yang, A conformally invariant sphere theorem in four dimensions,
Publ. Math. Inst. Hautes Études Sci. No. 98 (2003), 105–143.
\bibitem{CHY}
S.-Y. A.  Chang, F. Hang, P. Yang, On a class of locally conformally flat manifolds, Int. Math. Res. Not. (2004), no. 4, 185–209.
\bibitem{Chang-Yang}
S.-Y. A.  Chang, P.  Yang,
Extremal metrics of zeta function determinants on 4-manifolds,
Ann. of Math. (2) 142 (1995), no. 1, 171–212.
\bibitem{CGZ}
S.-Y. A.  Chang, M.  Gursky, S.  Zhang, 
A conformally invariant gap theorem characterizing $\mathbb{CP}^2$ via the Ricci flow, 
Math. Z. 294 (2020), no. 1-2, 721–746.


\bibitem{CGT}
J. Cheeger, M. Gromov, M.  Taylor,  Finite propagation speed, kernel estimates for functions of the Laplace operator, and the geometry of complete Riemannian manifolds, 
J. Differential Geometry 17 (1982), no. 1, 15–53.




\bibitem{Cheng}
S. Cheng, Eigenvalue comparison theorems and its geometric applications, Math. Z. 143 (1975), no. 3, 289–297.

\bibitem{DM}
Z. Djadli, A.  Malchiodi, 
Existence of conformal metrics with constant $Q$-curvature,
Ann. of Math. (2) 168 (2008), no. 3, 813–858.


\bibitem{GWW}
Y. Ge, G.  Wang, W.  Wei,  A Yamabe problem for the quotient between the Q curvature and the scalar curvature, arXiv:2603.15074.




\bibitem{Gursky-IUMJ}
M. 	Gursky, Locally conformally flat four- and six-manifolds of positive scalar curvature and positive Euler characteristic,
Indiana Univ. Math. J. 43 (1994), no. 3, 747–774.

\bibitem{Gursky Ann}
M. Gursky,  The Weyl functional, de Rham cohomology, and Kähler-Einstein metrics, 
Ann. of Math. (2) 148 (1998), no. 1, 315–337.

\bibitem{GUrsky CMP 97}
M. Gursky, 
Uniqueness of the functional determinant,
Comm. Math. Phys. 189 (1997), no. 3, 655–665.
\bibitem{Gursky CMP}
M. Gursky, The principal eigenvalue of a conformally invariant differential operator, with an application to semilinear elliptic PDE,  Comm. Math. Phys. 207 (1999), no. 1, 131–143.
\bibitem{GM}
M. Gursky, A.  Malchiodi, A strong maximum principle for the Paneitz operator and a non-local flow for the Q-curvature,
J. Eur. Math. Soc. (JEMS) 17 (2015), no. 9, 2137–2173.
\bibitem{GV}
M. Gursky, J.  Viaclovsky, 
A fully nonlinear equation on four-manifolds with positive scalar curvature, 
J. Differential Geom. 63 (2003), no. 1, 131–154.

\bibitem{Jost}
J. Jost, Riemannian geometry and geometric analysis, Seventh edition, Universitext, Springer, Cham, 2017, xiv+697 pp. ISBN: 978-3-319-61859-3; 978-3-319-61860-9.


\bibitem{KL}
K. Kuwae, X.-D.  Li, New Laplacian comparison theorem and its applications to diffusion processes on Riemannian manifolds,
Bull. Lond. Math. Soc. 54 (2022), no. 2, 404–427.
\bibitem{Lee}
S. Lee, 
Some uniformization problems for a fourth order conformal curvature, J. Funct. Anal. 288 (2025), no. 5, Paper No. 110791, 34 pp.
\bibitem{Li 26 RMI}
M. Li,  Obstructions to prescribed Q-curvature of complete conformal metrics on $\mathbb{R}^n$, 
Rev. Mat. Iberoam. 42 (2026), no. 1, 75–94.

\bibitem{LW}
M. Li, J. Wei, Higher order Bol's inequality and its applications, arXiv:2308.11388v2.
\bibitem{LWX}
M. Li, J. Wei,  X. Xu, On geometry of $Q_g^{(2k)}$-curvature, arXiv:2506.20165.

\bibitem{LX}
M. Li, X. Xu, A sharp isoperimetric inequality and the top order $Q$-curvature, 	arXiv:2607.06951.

\bibitem{X. Li}
X.-M. Li,
On extensions of Myers' theorem, 
Bull. London Math. Soc. 27 (1995), no. 4, 392–396.

\bibitem{Myers}
S. Myers,  Riemannian manifolds with positive mean curvature.
Duke Math. J. 8 (1941), 401–404.

\bibitem{Pin}
Y. Pinchover, 
Topics in the theory of positive solutions of second-order elliptic and parabolic partial differential equations, Spectral theory and mathematical physics: a Festschrift in honor of Barry Simon's 60th birthday, 329–355.
Proc. Sympos. Pure Math., 76, Part 1
American Mathematical Society, Providence, RI, 2007.


\bibitem{ShenYe Duke}
Y. Shen, R. Ye, On stable minimal surfaces in manifolds of positive bi-Ricci curvatures, 
Duke Math. J. 85 (1996), no. 1, 109–116.
\bibitem{ShenYe}
Y. Shen,  R. Ye,
On the geometry and topology of manifolds of positive bi-Ricci curvature, arXiv:dg-ga/9708014.

\bibitem{Simon}
B. Simon, Schrödinger semigroups,  Bull. Amer. Math. Soc. (N.S.) 7 (1982), no. 3, 447–526.



	\end{thebibliography}
\end{document}